\definecolor{Heather}{RGB}{164, 132, 172}
\setlist[enumerate,1]{label=\upshape(\arabic*)}
\newlist{myenumi}{enumerate}{1}
\setlist[myenumi,1]{label=\upshape(\roman*)}
\newlist{myenuma}{enumerate}{1}
\setlist[myenuma,1]{label=\upshape(\alph*)}
\declaretheorem[name=Theorem, numberwithin=section]{theorem}
\declaretheorem[name=Theorem, numbered=no]{theorem*}
\declaretheorem[name=Lemma, numberlike=theorem]{lemma}
\declaretheorem[name=Lemma, numbered=no]{lemma*}
\declaretheorem[name=Proposition, numberlike=theorem]{proposition}
\declaretheorem[name=Definition, numberlike=theorem, style=definition]{definition}
\declaretheorem[name=Conjecture, numberlike=theorem, style=remark]{conjecture}
\declaretheorem[name=Example, numberlike=theorem, style=remark]{example}
\declaretheorem[name=Remark, numberlike=theorem, style=remark]{remark}
\declaretheorem[name=Theorem]{theoremx}
\declaretheorem[name=Corollary, numberlike=theoremx]{corollaryx}
\numberwithin{equation}{section}
\crefname{conjecture}{Conjecture}{Conjectures}
\crefname{mtheorem}{MaybeTheorem}{MaybeTheorem}
\crefname{theoremx}{Theorem}{Theorems}
\crefname{corollaryx}{Corollary}{Corollaries}
\providecommand\@dotsep{5}
\def\listtodoname{List of Todos}
\def\listoftodos{\@starttoc{tdo}\listtodoname}
\NewDocumentEnvironment{mytodoenv}{m}{\hypersetup{hidelinks}\textsf{\textbf{#1:}} }{}
\title[PSC with point singularities]{Positive scalar curvature with point singularities}
\subjclass[2020]{53C21 (Primary) 57R65; 58J22 (Secondary)}
\author{Simone Cecchini}
\thanks{This work was supported by a grant from the Simons Foundation (MPS-TSM-00007902, SC)}
\address[Simone Cecchini]{Department of Mathematics, Texas A\&M University, College Station, TX 77843, USA}
\email{\href{mailto:cecchini@tamu.edu}{cecchini@tamu.edu}}
\urladdr{\href{https://simonececchini.org}{simonececchini.org}}
\author{Georg Frenck}
\address[Georg Frenck]{Institut f\"ur Mathematik, Universit\"at Augsburg, Universitätsstraße 14,  86159 Augsburg, Bavaria, Germany}
\email{\href{mailto:math@frenck.net}{math@frenck.net}}
\urladdr{\href{http://frenck.net}{frenck.net}}
\thanks{Co-funded by the Deutsche Forschungsgemeinschaft (DFG, German Research Foundation) – Project numbers
  313840899; %
  523079177; %
  427320536; %
  390685587. %
} %
\author{Rudolf Zeidler}
\thanks{Co-funded by the European Union (ERC Starting Grant 101116001 – COMSCAL). Views and opinions expressed are however those of the author(s) only and do not necessarily reflect those of the European Union or the European Research Council. Neither the European Union nor the granting authority can be held responsible for them.}
\address[Rudolf Zeidler]{University of Münster, Mathematisches Institut, Einsteinstr.\ 62, 48149 Münster, Germany}
\curraddr{Universität Potsdam, Institut für Mathematik, Karl-Liebknecht-Str.\ 24--25, 14476 Potsdam, Germany}
\email{\href{mailto:rudolf.zeidler@uni-potsdam.de}{rudolf.zeidler@uni-potsdam.de}}
\urladdr{\href{https://www.rzeidler.eu}{www.rzeidler.eu}}
\begin{document}

\begin{abstract}
  We show that in every dimension $n \geq 8$, there exists a smooth closed manifold $M^n$ which does not admit a smooth positive scalar curvature (``psc'') metric, but $M$ admits an $\mathrm{L}^\infty$-metric which is smooth and has psc outside a singular set of codimension $\geq 8$.
  This provides counterexamples to a conjecture of Schoen.
  In fact, there are such examples of arbitrarily high dimension with only single point singularities.
  We also discuss related phenomena on exotic spheres and tori.
  In addition, we provide examples of $\mathrm{L}^\infty$-metrics on $\mathbb{R}^n$ for certain $n \geq 8$ which are smooth and have psc outside the origin, but cannot be smoothly approximated away from the origin by everywhere smooth Riemannian metrics of non-negative scalar curvature.
  This stands in precise contrast to established smoothing results via Ricci–DeTurck flow for singular metrics with stronger regularity assumptions.
  Finally, as a positive result, we describe a $\mathrm{KO}$-theoretic condition which obstructs the existence of $\mathrm{L}^\infty$-metrics that are smooth and of psc outside a finite subset.
  This shows that closed enlargeable spin manifolds do not carry such metrics.
\end{abstract}

\maketitle

\section{Introduction}
Studying weak lower curvature bounds in various singular settings has been a core aspect of Riemannian geometry since a long time.
While for spaces with \emph{scalar curvature} bounded below such a theory is not yet fully developed, various approaches with results in different directions have been proposed recently (for instance~\cite{cecchini2023lipschitz,LeeLeFloch,GromovPlateauBilliard,Burkhardt-Guim:PointwiseBounds,Bamler:LowerBound,SormaniScalarIntrinsic,CM:Skeleton,dong2023stabilityeuclidean3spacepositive,LeeNaberNeumayer,shi-tam-manifolds-with-boundary,Miao:Corners}).
In this paper, we investigate scalar curvature lower bounds for Riemannian \(\Lp^\infty\)-metrics in the sense of the following definition.

\begin{definition}[{cf.~\cite[Definition~1.3]{CM:Skeleton}}]\label{defi:Linftymetric}
    Let \(M\) be a smooth manifold.
    A \emph{uniformly Euclidean} metric (or for short \enquote{\emph{\(\Lp^\infty\)-metric}}) is a measurable symmetric \((0,2)\)-tensor \(g\) on \(M\) such that there exists a smooth Riemannian metric \(g_0\) on \(M\) and \(\Lambda > 0\) with
    \[
      \Lambda^{-1} g_0(\xi, \xi) \leq g(\xi,\xi) \leq \Lambda g_0(\xi,\xi)  
    \]
    for almost all \(x \in M\) and all tangent vectors \(\xi \in \T_x M\).
\end{definition}

For uniformly Euclidean metrics which are smooth outside a singular subset of sufficiently high codimension, it makes sense to study the behavior of lower scalar curvature bounds on the regular subset of the metric.
The first general results in this direction have been obtained by \textcite{CM:Skeleton}, where the following conjecture of Schoen was popularized.
\begin{conjecture}[{Schoen, cf.~\cite[Conjecture 1.5]{CM:Skeleton}}] \label{schoen}
    Let \(M\) be a closed manifold that does not support any smooth Riemannian metric of positive scalar curvature (i.e.\ \(M\) is Yamabe non-positive).
    Let \(S \subset M\) be a closed submanifold of codimension \(\geq 3\) and let \(g\) be an \(\Lp^\infty\)-metric on \(M\) which is smooth on \(M \setminus S\) and satisfies \(\scal_g \geq 0\) on \(M \setminus S\).
    Then \(g\) extends smoothly to \(M\) and \(\Ric_g = 0\).
\end{conjecture}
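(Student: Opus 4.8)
\medskip

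Since the abstract announces that \cref{schoen} is \emph{false}, I will not attempt a proof; instead I sketch (a) the natural proof attempt and the point at which it must break down, and (b) how I would build a counterexample. The real work is in (b).

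\emph{The natural attempt.} Beyond classical rigidity, all of \cref{schoen} is the claim that $g$ \emph{extends smoothly}: if $g$ is the restriction of a smooth metric $\bar g$ on $M$ with $\scal_{\bar g}\ge 0$, then since $M$ is Yamabe-non-positive the Kazdan--Warner trichotomy gives $\scal_{\bar g}\equiv 0$, and a second application of Kazdan--Warner (a closed scalar-flat manifold that is not Ricci-flat admits a psc metric) upgrades this to $\Ric_{\bar g}=0$. So I would try to obtain $\bar g$ by \emph{smoothing} $g$. A singular set $S$ of codimension $\ge 3$ has vanishing capacity, which suggests that the $\Lp^\infty$-bound plus $\scal_g\ge 0$ on $M\setminus S$ should suffice to run Ricci--DeTurck flow from the non-smooth initial datum $g$ --- this flow has short-time existence from $\Lp^\infty$-data sufficiently close to a smooth metric, and on the regular part it preserves lower scalar-curvature bounds --- producing a smooth $\bar g$ with $\scal_{\bar g}\ge 0$. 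I expect this to be exactly the step that fails: near a point singularity the cross-sectional geometry can be arranged so that no smooth metric --- indeed no continuous metric with $\scal\ge 0$ --- fills in the puncture, even though $g$ remains uniformly Euclidean, in line with the known smoothing theorems requiring regularity strictly stronger than ``uniformly Euclidean''.

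\emph{The counterexample.} I would aim for a single point singularity, so I want a closed spin manifold $M^n$ with nonzero $\alpha$-invariant --- for instance an exotic sphere $\Sigma^n$ with $\alpha(\Sigma)\neq 0$, which exist whenever $n\equiv 1,2\pmod 8$ --- so that $M$ carries no smooth psc metric by Lichnerowicz--Hitchin and is in particular Yamabe-non-positive. The model near the singular point $p$ is a metric cone $g_{\mathrm{cone}}=dt^2+t^2 g_Y$ over a cross-section $(S^{n-1},g_Y)$ (or a metric asymptotic to such a cone). Two soft observations single this out: for \emph{any} smooth $g_Y$ the cone $g_{\mathrm{cone}}$ is uniformly equivalent to the flat cone $dt^2+t^2 g_{\mathrm{round}}$, i.e.\ to the Euclidean metric on a punctured ball, hence is an $\Lp^\infty$-metric which is smooth away from the vertex; and $\scal_{g_{\mathrm{cone}}}=t^{-2}\bigl(\scal_{g_Y}-(n-1)(n-2)\bigr)$ is $\ge 0$ whenever $\scal_{g_Y}\ge(n-1)(n-2)$ pointwise, and is even $>0$ for a shrunk round sphere $g_Y=\lambda g_{\mathrm{round}}$ with $\lambda<1$. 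So the plan is to realize $M=W\cup_{S^{n-1}}B^n$ with $W^n$ a compact spin manifold with $\partial W\cong S^{n-1}$ carrying a metric of $\scal>0$ that is cone-adapted near $\partial W$, and to equip $M$ with the $\Lp^\infty$-metric glued from this metric on $W$ and $g_{\mathrm{cone}}$ on $B^n$: it is then smooth with $\scal\ge 0$ on $M\setminus\{p\}$, whereas $M$ admits no smooth psc metric, so the conclusion of \cref{schoen} fails outright.

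\emph{The main obstacle.} The crux --- which I expect to consume most of the paper --- is to find $W$ with $W\cup B^n$ of nonzero $\alpha$-invariant while $W$ still carries $\scal>0$ with cone-adapted boundary, the collar matching across the (possibly exotic) gluing. One checks quickly that the naive warped-product model (a round thin cone on both sides) only produces $S^n$, so one genuinely needs $W$ realizing a nontrivial Dirac-type index together with interior positive scalar curvature: a scalar-curvature lower bound on the cross-section plus $\scal>0$ on $\operatorname{int} W$ does \emph{not} force that index to vanish, in sharp contrast to a \emph{cylindrical} boundary, where Atiyah--Patodi--Singer together with Lichnerowicz would kill it. I expect such cone-boundary fillings to exist precisely for $n\ge 8$ (below that the minimal-hypersurface descent should provide an obstruction), and running the construction fibrewise over a base should yield singular sets of codimension $\ge 8$ in arbitrary dimension --- dually to the positive statement that a nonvanishing higher index, such as enlargeability, persists as an obstruction to such $\Lp^\infty$-metrics.
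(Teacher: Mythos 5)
Your reading of the situation is right: the statement is a conjecture that the paper refutes, and your counterexample blueprint is essentially the one the paper carries out in \cref{sec:not-so-schoen} --- a cone metric \(\D{t}^2 + f(t)^2 h\) over a psc cross-section \((\Sphere^{n-1},h)\), glued to a psc filling of the complement of a ball in a spin manifold with \(\alpha \neq 0\) (exotic spheres for \(n \equiv 1,2\), products of \(\mathrm{K}3\) surfaces for \(n \equiv 0,4 \bmod 8\)), with products over tori giving the remaining dimensions. Your two ``soft observations'' (uniform equivalence of any cone to the flat cone, and the scaling \(\scal = t^{-2}(\scal_h - (n-1)(n-2))\)) are exactly \cref{lem:Sobolev_regular} and \cref{eq:scal-of-g-conical}.

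However, there is a genuine gap at precisely the point you flag as ``the main obstacle'': you assert, but do not prove, the existence of the compact spin filling \(W\) with \(\partial W \cong \Sphere^{n-1}\), interior \(\scal > 0\), and boundary-adapted collar, such that \(W \cup \Disk^n\) has \(\alpha \neq 0\). This is the entire content of \cref{lem:collared_metric}, and it does not require any new index-theoretic input or a dimension count of the kind you speculate about. The paper obtains it purely from the Gromov--Lawson/Schoen--Yau surgery theorem in Gajer's form for manifolds with boundary: perform \(2\)-surgeries away from the ball to make \(M\) \(2\)-connected, note that \(\Sphere^{n-1} \hookrightarrow M_2 \setminus \Ball^n\) is then \(2\)-connected, apply the surgery theorem to produce a collared psc metric on \(M_2 \setminus \Ball^n\), and reverse the \(2\)-surgeries (codimension \(3\)) to land back on \(M \setminus \Ball^n\). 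In particular the filling exists for \emph{every} closed simply connected spin manifold of dimension \(\geq 5\); your guess that such fillings ``exist precisely for \(n \geq 8\)'' misplaces the source of the dimension restriction, which comes solely from the fact that non-psc simply connected manifolds with \(\mathrm{KO}_n \neq 0\) first occur at \(n = 8\) (in dimensions \(5\)--\(7\) every simply connected manifold admits psc, so there is no candidate \(M\)). Also note the cross-section metric \(h\) produced this way is an arbitrary psc metric on \(\Sphere^{n-1}\), not a shrunk round one, so the cone parameter \(\lambda\) must be chosen large relative to \(\inf \scal_h\); the interpolation between the collar \(h + \D{t}^2\) and the cone is an explicit but necessary computation (\cref{eq:first-derivative-of-f,eq:second-derivative-of-f}) that your sketch omits. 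Without the surgery-theoretic lemma, the counterexample is not established.
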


In other words, this conjecture predicts that subsets of codimension at least three are invisible for the purposes of scalar curvature in a quite strong sense.
This differs from the setting of codimension one and two, where it is well known that such statements do not hold without additional regularity assumptions or boundary conditions, compare \cite[\S 8.1--8.2]{CM:Skeleton}.
Likewise, the condition of being uniformly Euclidean cannot be dropped from \cref{schoen}, otherwise suitable Schwarzschild metrics would yield counterexamples with point singularities, compare~\cite[\S 8.3]{CM:Skeleton}.

\Cref{schoen} has attracted considerable interest and it has been confirmed in many specific cases.
For instance, if \(\dim(M) = 3\), it holds in general by \textcite[Corollary 1.6]{CM:Skeleton} (modulo the caveat of \cref{rem:coordinate_singular} below).
In the \(4\)-dimensional case, far-reaching results have been established by \textcite{kazaras2019desingularizing}.
In a closely related direction, a version of Llarull's theorem for metrics with \(\Lp^\infty\) point singularities was recently proved by \textcite{chu2024llarullstheorempuncturedsphere}.
Fill-in problems with \(\Lp^\infty\) point singularities in dimension three were studied by \textcite[\S 4]{Mantoulidis-Miao-Tam:CapacitySingular}.
We also note that related results for conical singularities and the positive mass theorem have been proved by \textcite{dai2023positive,dai2024positive_nonspin}.

In sharp contrast to all of this, we show in this paper that \(\Lp^\infty\)-regularity is in general not enough to retain positive scalar curvature (\enquote{psc}) obstructions on closed manifolds, even assuming smoothness outside a single point.

\begin{theoremx}\label{thm:main_counterexamples}
    For every \(n \geq 8\), there exists a closed smooth manifold \(M\) of dimension \(n\) which does not admit a smooth psc metric, but which admits an \(\Lp^\infty\)-metric which is smooth and of uniformly positive scalar curvature on the complement of a connected submanifold \(S \subset M\)of codimension \(\geq 8\).
    Moreover, the metric we obtain is of regularity \(\SobolevW^{2,p/2}\), for every \(p<\operatorname{codim}(S \subset M)\).
    If \(8 \leq n \equiv 0, 1, 2,\) or \(4 \mod 8\), then there are examples where \(S = \{\ast\}\) is a single point.
    
    In particular, \cref{schoen} is false in every dimension \(n \geq 8\) and there are counterexamples where the singular set has arbitrarily high codimension.
\end{theoremx}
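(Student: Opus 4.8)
The plan is to build $M$ by a \emph{twisted capping} procedure. I will produce a compact spin manifold $V^n$ with $\partial V = S^{n-1}$, carrying a metric of positive scalar curvature which near $\partial V$ is a Riemannian product $(S^{n-1}, g_1) \times [0,\varepsilon)$, and then form the closed manifold $M = V \cup_{S^{n-1}} D^n$ by gluing in a disk along a diffeomorphism $f$ of $S^{n-1}$ (for a higher-codimensional singular set one replaces $S^{n-1}$ by a sphere bundle and runs the fiberwise analogue of everything below). The key point is that the \emph{same} disk also admits an explicit singular metric compatible with the gluing: on $D^n \setminus \{0\} \cong S^{n-1} \times (0,1]$ I take a warped product $dr^2 + h(r)^2 \tilde g_r$, where $h$ is a torpedo profile — constant, equal to a small $\delta$, near $r = 1$, then concave and decreasing with $h(r) \sim \delta' r$ as $r \to 0$ for some $\delta' < 1$ — and $\tilde g_r$ is a smooth path of metrics on $S^{n-1}$ that matches the boundary metric of $V$ (transported by $f$) near $r = 1$, so the two pieces fit together to a metric smooth away from $0$, and that equals the round metric for all sufficiently small $r$. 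Near $0$ this metric is comparable to the flat cone $dr^2 + \delta'^2 r^2 g_{S^{n-1}}$, hence to a smooth metric with constant $\delta'^{-2}$; so it is uniformly Euclidean, and — because $\delta' < 1$ — it has a genuine conical singularity, so it does not extend even continuously over $0$.

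Next I would verify positivity of the scalar curvature from the warped-product formula $\scal = \tfrac{1}{h^2}\scal_{\tilde g_r} + \tfrac{(n-1)(n-2)}{h^2}\bigl(1 - (h')^2\bigr) - \tfrac{2(n-1)h''}{h} + (\text{error terms in }\partial_r \tilde g_r)$. For small $r$ the round-cone term $\tfrac{(n-1)(n-2)(1-\delta'^2)}{\delta'^2 r^2}$ dominates; this is precisely where codimension $\geq 3$ enters, since for a codimension-$q$ singular set the fiber is $S^{q-1}$ and the analogous term is $\tfrac{(q-1)(q-2)(1-\delta'^2)}{\delta'^2 r^2}$, positive only when $q \geq 3$. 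In the region where $h \equiv \delta$ the term $\tfrac{(n-1)(n-2)}{\delta^2}$ is large and, for $\delta$ chosen small, outweighs the bounded term $\tfrac{1}{\delta^2}\scal_{\tilde g_r}$ and the (fixed, scale-invariant) error from $\partial_r \tilde g_r$; on the remaining transition part $h$ is a concave profile with $|h'| < 1$, as in an ordinary torpedo. Since the metric is then smooth and positively curved on the complement of $0$ in a compact manifold, the scalar curvature is uniformly positive. The regularity assertion is a direct computation on the flat cone: in a smooth chart $|\partial g| \sim r^{-1}$ and $|\partial^2 g| \sim r^{-2}$, so $\int_{\{r < 1\}} |\partial^2 g|^{p/2} \, r^{\operatorname{codim}(S) - 1} \, dr < \infty$ exactly for $p < \operatorname{codim}(S)$, giving the stated $\SobolevW^{2,p/2}$-regularity.

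The heart of the proof is supplying the building block $V$ so that $M$ carries no smooth psc metric. What is needed is a psc metric on $V$ whose boundary value $g_1$ on $S^{n-1}$ fails to extend to a psc metric on $D^n$ — otherwise $M$ would be psc by gluing in a psc disk — together with a nonzero index obstruction on the closed manifold $M$; the torpedo cap circumvents the non-extendability not by filling with a psc disk but by absorbing the obstruction into a mild, uniformly Euclidean conical singularity. For $n \geq 8$ both conditions can be met using plumbings: for $n \equiv 1, 2 \bmod 8$ one takes $f$ a diffeomorphism of $S^{n-1}$ representing a Hitchin-type class, $V$ a round hemisphere and $g_1 = f^* g_{\mathrm{round}}$, so that $M$ is a Hitchin exotic sphere with nonzero $\alpha$-invariant; for $n \equiv 0, 4 \bmod 8$ one takes $V$ a boundary connected sum of suitable disk-bundle plumbings with $\partial V = S^{n-1}$ and $g_1$ a Carr psc metric on $S^{n-1}$, so that $M = V \cup D^n$ has $\hat A(M) \neq 0$. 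In every case $M$ is closed, simply connected, spin, of dimension $\geq 5$, with nonvanishing $\alpha$-invariant, hence admits no smooth psc metric by the Lichnerowicz argument — and, by Stolz's theorem, $\alpha$ is there the only obstruction, which is why the point-singularity examples are confined to the dimensions $n \geq 8$ with $\mathrm{KO}_n \neq 0$, i.e.\ $n \equiv 0, 1, 2, 4 \bmod 8$. These examples already have codimension-$n$ singular sets, yielding counterexamples of arbitrarily large codimension; for the remaining residue classes I would instead take $M = T^{n-8} \times M_0^8$ with $M_0^8$ one of the point-singularity building blocks and singular set the connected submanifold $T^{n-8} \times \{\ast\}$ of codimension $8$, using that the Rosenberg index $\alpha(M) = \alpha(T^{n-8}) \times \alpha(M_0^8)$ is nonzero while the product of a unit flat torus with a suitably rescaled copy of the singular metric on $M_0^8 \setminus \{\ast\}$ is smooth, uniformly positively curved, and uniformly Euclidean away from the singular set. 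The main obstacle is exactly this construction and analysis of $V$: reconciling a positive scalar curvature structure on $M \setminus S$ with a genuine index obstruction on $M$ forces careful use of surgery theory and of the topology of the spaces $\mathcal R^+(S^{q-1})$ of psc metrics on spheres, and it is this tension that pins down both the dimension constraints and the bound $\operatorname{codim}(S) \geq 8$, the latter being the smallest codimension $q$ for which $\mathcal R^+(S^{q-1})$ is known to carry the requisite nontrivial classes (starting with $S^7$).
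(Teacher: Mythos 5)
Your strategy coincides with the paper's: take a simply connected spin $B^n$ with $\alpha(B;1)\neq 0$ (products of $\mathrm{K}3$ surfaces for $n\equiv 0,4$, Hitchin spheres for $n\equiv 1,2 \bmod 8$), put a collared psc metric on $B\setminus D^n$, cap off with a uniformly Euclidean conical metric whose $\SobolevW^{2,p/2}$-regularity follows from $|\partial^2 g|\sim r^{-2}$, and take products with tori for the remaining residue classes, detecting non-existence of smooth psc via the Rosenberg index. The only real presentational difference is that the paper produces the collared psc metric on the complement of the disk by the Gromov--Lawson/Gajer surgery theorem for arbitrary simply connected spin manifolds, rather than from explicit plumbings and Carr-type metrics; also, the operative reason for the bound $\operatorname{codim}(S)\geq 8$ is simply that the building block must be a simply connected non-psc spin manifold, which forces its dimension to be $\geq 8$, rather than anything about $\mathcal{R}^+(S^{q-1})$.

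One step of your cap does not work as written: you require the link metrics $\tilde g_r$ to interpolate from $g_1$ to the round metric near the tip. In precisely the interesting cases $g_1$ lies in a different path component of $\mathcal{R}^+(S^{n-1})$ from $g_\circ$ (for instance $g_1=f^*g_\circ$ with $f$ a Hitchin diffeomorphism --- this is exactly how one detects $\pi_0\mathcal{R}^+(S^{n-1})\neq \ast$), so any such path must leave the psc region; and your claim that $(n-1)(n-2)/\delta^2$ outweighs $\delta^{-2}\scal_{\tilde g_r}$ for $\delta$ small is not valid, since both terms scale as $\delta^{-2}$ --- positivity would require $\scal_{\tilde g_r}>-(n-1)(n-2)$ along the whole path, which is not automatic. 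The repair is simply to omit this normalization and cone off $g_1$ itself (take $\tilde g_r\equiv g_1$), as the paper does: a cone over \emph{any} metric on $S^{n-1}$ is uniformly Euclidean by compactness, so nothing is gained by making the link round, and with a constant link the scalar curvature estimate closes exactly as in your ``round-cone term dominates'' computation.
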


\begin{remark}
    Due to the Sobolev \(\SobolevW^{2,p/2}\)-regularity, in our examples the scalar curvature even exists in the weak sense on all of \(M\) as a uniformly positive \(\Lp^{p/2}\)-function for all \(p < \operatorname{codim}(S \subset M)\).
    In particular, our metrics have positive distributional scalar curvature in the sense of \textcite[Definition~2.1]{LeeLeFloch}.
    In fact, the scalar curvature of these examples blows up like \(t^{-2}\), where \(t\) denotes the distance to the singular set.
\end{remark}

The counterexamples in \cref{thm:main_counterexamples} are obtained via classification results based on surgery theory which allow to construct psc metrics with conical singularities on all high-dimensional simply connected manifolds (see \cref{prop:linfty-on-simply-connected-manifolds}).
To contrast this with the existing positive results concerning \cref{schoen}, note that the proofs usually proceed by first blowing up the singularity to a complete psc metric on the complement of the singular set. 
Next, appropriate topological obstructions are used to exclude the existence of such complete psc metrics, leading to a contradiction.
The last step fails precisely because this is unobstructed on punctured simply connected manifolds in high dimensions.

We also note that the literal statement of \cref{schoen} cannot hold for a simpler reason. The following observation was brought to our attention by Hajo Hein:

\begin{remark}\label{rem:coordinate_singular}
    One can introduce \enquote{coordinate singularities}:
    For instance, fix a point \(p \in \Torus^{n}\) on the torus of dimension \(n \geq 3\).
    Choose a diffeomorphism \(F \colon \Torus^n \setminus \{p\} \to \Torus^n \setminus \{p\}\) which does not extend smoothly to all of \(\Torus^{n}\) but whose derivatives and that of its inverse lie in \(\Lp^\infty(\Torus^n)\). Such a diffeomorphism \(F\) can be constructed locally in a neighborhood around \(p\) as the cone of a generic self-diffeomorphism \(f \colon \Sphere^{n-1} \to \Sphere^{n-1}\) that is isotopic to the identity.
     This then allows \(F\) to be interpolated to be the identity map outside a slightly larger neighborhood so that it can be extended globally on \(\Torus^{n} \setminus \{p\}\).
    Then \(g = F^\ast g_{\mathrm{flat}}\) yields an \(\Lp^\infty\)-metric on \(\Torus^{n}\) which is smooth and flat on \(\Torus^{n} \setminus \{p\}\), but which does not extend smoothly to all of \(\Torus^{n}\).
\end{remark}

In light of the above, it makes sense to separate \cref{schoen} into an \emph{obstruction aspect}, that is, concerning the non-existence of \(\Lp^\infty\)-metrics which are smooth and have psc on \(M \setminus S\), and a \emph{rigidity aspect} concerning extendability of \(\Lp^\infty\)-metrics which are Ricci-flat on \(M \setminus S\).
In fact, \cref{thm:main_counterexamples} shows that the obstruction aspect alone fails dramatically (see also \cref{rem:smooth_structure} below).

To study the rigidity aspect, \cref{rem:coordinate_singular} may suggest a minor clarification by only requiring a smooth extension of the metric after possibly changing the smooth atlas near the singular set.
This possibility is implicitly accounted for in the statements of removable singularity results for Einstein metrics (e.g.\ \cites[Theorem~5.1]{Nakajima:ConstructionCoordinates}[Theorem 3.1]{RemovingPointSingular}) and thus the \(3\)-dimensional positive result of \textcite[Corollary~1.6]{CM:Skeleton} needs to be interpreted in this way.
However, in high dimensions we do not expect this to be enough either because there is no general removable singularity theorem for Ricci-flat metrics with point singularities of only \(\Lp^\infty\) regularity.\footnote{For instance, take a cone over a non-round Einstein metric on a sphere (from e.g.~\cite{Boehm:Einstein}). This yields a singular Ricci-flat \(\Lp^\infty\)-metric on \(\R^n\) with a single point singularity at the origin that is unremovable even up to changing the smooth atlas.}

We also note that some (but not all) of the smooth manifolds \(M\) obtained from the construction in \cref{thm:main_counterexamples} are homeomorphic to manifolds which admit smooth psc metrics even though \(M\) itself does not.
Notably, this applies to the examples with point singularities in dimensions \(8 \leq n \equiv 1,2 \mod 8\).
This also yields more concrete counterexamples to the obstruction part of Schoen's conjecture:
\begin{example}[{see \cref{thm:exotic-spheres-have-l-infty-psc,remark:existence_of_non-trivial_Hitchin}}]
    Every high-dimensional exotic sphere admits an \(\Lp^\infty\)-metric which outside a single point is smoothly isometric to the standard round sphere, but there are exotic spheres which do not admit any globally smooth psc metric.
\end{example}
 
In particular, given an exotic sphere $\Sigma$, there is an $\Lp^\infty$-metric $g$ on $\Sigma$ of \(\scal_g = n(n-1)\) which cannot be extended to a smooth metric, and there is a $1$-Lipschitz-map $(\Sigma,g)\to(\Sphere^n,g_\circ)$ of degree $1$ which cannot be a global isometry in the Riemannian sense.
This shows that a naive statement of Llarull's theorem in the setting of $\Lp^\infty$-metrics cannot hold true and instead one needs a more subtle version, for instance as in \cite[Theorem~1.4]{chu2024llarullstheorempuncturedsphere}.

Similarly, a more severe variant of the phenomenon described in \cref{rem:coordinate_singular} concerning the rigidity part occurs on exotic tori:

\begin{example}[{see \cref{thm:exotic-tori-have-l-infty-flat}}]
    There are exotic tori \(T_\Sigma\) with an \(\Lp^\infty\)-metric which outside a single point is smooth and flat, although \(T_\Sigma\) does not admit any globally smooth metric of non-negative scalar curvature.
\end{example}

\begin{remark}[see~\cref{rem:Novikov}]\label{rem:smooth_structure}
    Despite highlighting the previous examples, we emphasize that the failure of \cref{schoen} does not primarily lie in the smooth structure.
    It also follows from the proof of \cref{thm:main_counterexamples} that in every dimension \(\geq 8\) there exist counterexamples, where \(M\) is not homeomorphic to any smooth manifold which admits a psc metric.
    This includes in particular the examples with point singularities where \(8 \leq n \equiv 0, 4 \mod 8\).
    For example, $M$ could be taken to be the product of at least two $\mathrm{K}3$ surfaces and a torus of arbitrary dimension.
\end{remark}

A more analytic approach to study \cref{schoen}, and lower (scalar) curvature bounds on metrics with low regularity in general, is to regularize them using geometric flows, for a selection of works in this direction see \cite{Bamler:LowerBound,Simon:DeformationsC0,Burkhardt-Guim:PointwiseBounds,burkhardtguim2024smoothinglinftyriemannianmetrics,LammSimon:W22,lee2021continuousmetricsconjectureschoen,ChengLeeTam:Singular,chu2022riccideturckflowroughmetrics}.
All existing results following this line need additional assumptions on the metric beyond being uniformly Euclidean---for instance, additional Sobolev or \(\Ct^0\) regularity  near the singular set or being sufficiently \(\Lp^\infty\)-close to a smooth background metric.
It stands to reason that \cref{thm:main_counterexamples} must impose definite restrictions for such a procedure to work for the scalar curvature of \(\Lp^\infty\)-metrics.

\begin{remark}
    Indeed, \textcite[Theorem~6.2]{chu2022riccideturckflowroughmetrics} showed that uniformly Euclidean metrics of \(\SobolevW^{1,n}\)-regularity can be smoothed via Ricci--DeTurck flow (see also \textcite{LammSimon:W22} who treated the \(\SobolevW^{2,2}\) case in dimension four).
    Since our counterexamples from \cref{thm:main_counterexamples} in the point singularity case are in \(\SobolevW^{2,p/2} \subset \SobolevW^{1,p}\) for all \(p < n\), this means that \(\SobolevW^{1,n}\) is the sharp threshold on the Sobolev scale for such a smoothing result in general.
\end{remark}

In another direction, \textcite{burkhardtguim2024smoothinglinftyriemannianmetrics} recently showed that one may smooth away certain \(\Lp^\infty\)-singularities on \(\R^n\) using Ricci--DeTurck flow while preserving non-negativity of the scalar curvature, provided that the singular metric is sufficiently \(\Lp^\infty\)-close to the Euclidean metric.
Since the counterexamples in \cref{thm:main_counterexamples} are based on high-dimensional simply connected manifolds which do not admit psc, at first glance this may look like a subtle global phenomenon of the underlying closed manifold.
However, as we note in the following theorem, they actually lead to examples of \(\Lp^\infty\)-metrics on \(\R^n\) with a single point singularity which are not smoothable while preserving non-negative scalar curvature.
In particular, this answers~\cite[Question~3]{burkhardtguim2024smoothinglinftyriemannianmetrics}.

\begin{theoremx}\label{thm:paulasquestion}
    For $n\ge8$, $n\equiv0, 1, 2, $ or $4 \mod 8$, there exists a metric $g$ on $\R^n$ which has regularity \(\SobolevW^{2,{p/2}}_\loc\) for each \(p < n\) such that
    \begin{itemize}
        \item \(\Lambda^{-1} g_0 \leq g \leq \Lambda g_0\) a.e.\ for some fixed \(\Lambda > 0\) and \(g_0\) the Euclidean metric,
        \item \(g\) is smooth on \(\R^n \setminus \{0\}\),
        \item $\scal_g \geq \lambda|x|^{-2} > 0$ on $\R^n\setminus \{0\}$ for some \(\lambda > 0\),
    \end{itemize}
     but there exists no family $(g_t)_{t \in (0, \varepsilon)}$ of smooth metrics on $\R^n$ satisfying both
    \begin{myenumi}
        \item $\scal_{g_t}\ge0$ for all \(t \in (0,\varepsilon)\),
        \item $g_t\xrightarrow{\Ct^\infty_{\mathrm{loc}}(\R^n\setminus\{0\})} g$ for $t \searrow 0$.
    \end{myenumi}
\end{theoremx}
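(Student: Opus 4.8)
The plan is to deduce \cref{thm:paulasquestion} directly from \cref{thm:main_counterexamples}. Fix $n\geq 8$ with $n\equiv 0,1,2,4\bmod 8$ and let $M$ be the closed smooth $n$-manifold provided by \cref{thm:main_counterexamples}, carrying an $\Lp^\infty$-metric $\bar g$ which is smooth and of uniformly positive scalar curvature on $M\setminus\{\ast\}$, has regularity $\SobolevW^{2,p/2}$ for all $p<n$, and whose scalar curvature blows up like the inverse square of the distance to $\ast$. Since the construction behind \cref{thm:main_counterexamples} (via \cref{prop:linfty-on-simply-connected-manifolds}) equips a neighbourhood of $\ast$ with a genuine metric cone, I would fix a chart $\kappa\colon W\to\R^n$ with $\kappa(\ast)=0$ such that, writing $t=|x|$, one has $\bar g=dt^2+t^2h$ on $\{0<|x|<4\}$ for a fixed smooth metric $h$ on $\Sphere^{n-1}$ with $\mu:=\min_{\Sphere^{n-1}}\scal_h-(n-1)(n-2)>0$. (If the singularity model furnished by the construction were not exactly conical, a preliminary Gromov--Lawson-type bending near $\ast$ would first be used to bring it into this form.)

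Next I would construct the metric $g$ on $\R^n$. Choose a smooth $\phi\colon(0,\infty)\to(0,\infty)$ with $\phi(t)=t$ for $t\leq 2$, which is concave with $\phi'$ non-increasing and valued in $[b,1]$ for some $b\in(0,1)$, and set $g:=dt^2+\phi(t)^2h$ on $\R^n\setminus\{0\}$ in the coordinates of $\kappa$. Then $g$ coincides with $\kappa_\ast\bar g$ on $\{0<|x|\leq 2\}$, so $g$ is smooth on $\R^n\setminus\{0\}$ and of regularity $\SobolevW^{2,p/2}_\loc$ for every $p<n$, since that is precisely the regularity of the model cone $dt^2+t^2h$. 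The warped-product formula gives
\[
  \scal_g=\frac{1}{\phi^2}\Bigl(\scal_h-(n-1)(n-2)(\phi')^2-2(n-1)\,\phi''\phi\Bigr)\geq\frac{\mu}{\phi^2}\geq\frac{\mu}{|x|^{2}}>0
\]
on $\R^n\setminus\{0\}$, using $\phi''\leq 0$, $(\phi')^2\leq 1$ and $0<\phi(t)\leq t$. Finally $g$ is uniformly equivalent to the Euclidean metric $g_0$ because $\phi(t)/t\in[b,1]$ and $h$ is uniformly equivalent to the round metric on $\Sphere^{n-1}$. This establishes the three bullet points together with the regularity claim.

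For the non-smoothing statement, suppose for contradiction that $(g_t)_{t\in(0,\varepsilon)}$ is a family of smooth metrics on $\R^n$ with $\scal_{g_t}\geq 0$ and $g_t\to g$ in $\Ct^\infty_{\mathrm{loc}}(\R^n\setminus\{0\})$. On the compact annulus $A:=\{1\leq|x|\leq 2\}$ the metric $g=\kappa_\ast\bar g$ is smooth with $\scal_g\geq\mu/4>0$. Fix a cutoff $\chi$ on $A$ with $\chi\equiv 1$ near $\{|x|=1\}$ and $\chi\equiv 0$ near $\{|x|=2\}$, and put $\hat g_t:=g+\chi\,(g_t-g)$. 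Since $g_t\to g$ in $\Ct^2(A)$ we have $\hat g_t\to g$ in $\Ct^2(A)$, and scalar curvature depends continuously on the $2$-jet of a nondegenerate metric, so $\scal_{\hat g_t}\to\scal_g$ uniformly on $A$; hence $\scal_{\hat g_t}>0$ on $A$ for all small $t$. For such $t$ define a smooth metric $G_t$ on $M$ by $G_t=(\kappa^{-1})_\ast g_t$ on $\kappa^{-1}(\{|x|\leq 1\})$, $G_t=(\kappa^{-1})_\ast\hat g_t$ on $\kappa^{-1}(A)$, and $G_t=\bar g$ on $M\setminus\kappa^{-1}(\{|x|<2\})$: these agree on overlaps because $\chi\equiv 1$ near $\{|x|=1\}$ and $\hat g_t=g=\kappa_\ast\bar g$ near $\{|x|=2\}$, and $\bar g$ is genuinely smooth on $M\setminus\kappa^{-1}(\{|x|<2\})$ since $\ast\in\kappa^{-1}(\{|x|<2\})$. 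Then $\scal_{G_t}\geq 0$ on $M$ and $\scal_{G_t}>0$ on the non-empty open set $\kappa^{-1}(\mathring{A})$, so by the classical theorem of Kazdan and Warner $G_t$ can be conformally deformed to a metric of positive scalar curvature, contradicting \cref{thm:main_counterexamples}. The crux of the argument is the construction of $g$: it has to be uniformly equivalent to the \emph{flat} metric while having \emph{everywhere} positive scalar curvature, so one cannot simply interpolate to the Euclidean metric outside a ball (which would be scalar-flat there) — this is exactly what the concave profile with $\phi'\leq 1$ is engineered to circumvent; the rest is a short gluing argument with all the essential content imported from \cref{thm:main_counterexamples}.
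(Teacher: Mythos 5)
Your proposal is correct. The construction of $g$ is essentially the paper's: both take a cone over a psc metric $h$ on $\Sphere^{n-1}$ obtained as the boundary metric of a collared psc metric on $M\setminus\Ball^n$ for a closed non-psc manifold $M$ (\cref{lem:collared_metric}), and both get $\scal_g\ge \mu t^{-2}$ from the warped-product formula and the Sobolev regularity from \cref{lem:Sobolev_regular}; your concave profile $\phi$ away from the origin is superfluous, since the paper's globally conical $(t/\lambda)^2h+\D{t}^2$ is already uniformly Euclidean with everywhere positive scalar curvature, so the difficulty you flag about interpolating to the flat metric outside a ball never arises. Where you genuinely diverge is the non-smoothability part. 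The paper restricts $g_\varepsilon$ to $\Disk^n_\lambda$, applies the B\"ar--Hanke deformation principle to make it collared while keeping $\scal\ge0$ and fixing the boundary metric $h_\varepsilon$, and then inserts a psc concordance from $h_\varepsilon$ to $h$ before reattaching $M\setminus\Ball^n$. You instead interpolate directly, $\hat g_t=g+\chi(g_t-g)$, on a fixed annulus where $\scal_g$ is strictly positive, and use $\Ct^2$-continuity of scalar curvature to keep $\scal_{\hat g_t}>0$ there for small $t$; the $\Ct^\infty_{\loc}$ convergence hypothesis supplies exactly the $\Ct^2$ control on the compact annulus that this needs, so the argument is sound and avoids both the boundary-deformation machinery and the isotopy-to-concordance step. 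Both routes finish identically ($\scal\ge0$ and positive somewhere on a closed manifold yields psc, by Kazdan--Warner or short-time Ricci flow), contradicting the choice of $M$. One cosmetic point: the cone produced by \cref{prop:linfty-on-simply-connected-manifolds} is $((t+R)/\lambda)^2h+\D{t}^2$ on a small collar, so to reach your normal form $\D{t}^2+t^2h'$ on $\{0<|x|<4\}$ you should take $h'=\lambda^{-2}h$ (the paper's choice of $\lambda$ then guarantees $\scal_{h'}>(n-1)(n-2)$, i.e.\ your $\mu>0$) and rescale the radial coordinate; no Gromov--Lawson-type bending is needed.
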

\cref{thm:main_counterexamples,thm:paulasquestion} suggest to us that the setting of \(\Lp^\infty\)-metrics with thin singular sets is in general insufficient for studying lower scalar curvature bounds in a local sense.
Instead, the fact that some aspects of \cref{schoen} hold true in certain cases appears to be more of a global topological phenomenon.
Thus, as a companion to our counterexamples which are based on simply connected spin obstructions, we note a stronger global spin obstruction to psc which does enable a positive result in the direction of \cref{schoen} for point singularities.
To formulate it, let $M$ be an $n$-dimensional closed spin manifold with fundamental group $\Gamma$.
Denote by $\alpha(M; 1)\in \KO_n(\R)$ the Hitchin genus~\cite{Hitchin} of $M$, that is, the KO-theoretic index of the $\Cl_n$-linear Dirac operator on $M$.
Let $\Cstar\Gamma$ be the maximal real group $\Cstar$-algebra of $\Gamma$, and let $\alpha(M;\Gamma)\in\KO_n(\Cstar\Gamma)$ be the Rosenberg index of $M$~\cite{Rosenberg:PSCNovikovI,Rosenberg:PSCNovikovII,Rosenberg:PSCNovikovIII}, see also \cite[\S 2]{Zeidler:width-largeness-index} for a brief survey.
The inclusion $\R\hookrightarrow\Cstar\Gamma$, sending $\lambda\in\R$ to \(\lambda\cdot 1_{\Cstar\Gamma}\), induces a map $j\colon \KO_n(\R)\to \KO_n(\Cstar\Gamma)$.
We will use the condition that \(j(\alpha(M;1))\neq\alpha(M;\Gamma)\).
In other words, this means that the Rosenberg index of \(M\) does not come from the trivial group.

\begin{theoremx}\label{thm:Ricci-flatness}
    Let $M$ be an $n$-dimensional closed spin manifold with fundamental group $\Gamma$.
    Suppose that $j(\alpha(M;1))\neq\alpha(M;\Gamma)$.
    Let $S\subset M$ be a finite set, and let $g$ be an \(\Lp^\infty\)-metric on $M$ which is smooth outside of $S$.
    If $\scal_g\geq0$ on $M\setminus S$, then $\Ric_g\equiv0$ on $M\setminus S$.
\end{theoremx}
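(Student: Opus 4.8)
The plan is to reduce the statement to an index computation for the Dirac operator of \(g\) on the open manifold \(W:=M\setminus S\), on which \(g\) is a genuine smooth Riemannian metric. On \((W,g)\) consider the \(\Cl_n\)-linear Dirac operator \(\slashed{D}^{\Cl}\) and its twist \(\slashed{D}_{\mathcal L}\) by the Mishchenko bundle \(\mathcal L=\widetilde M\times_\Gamma\Cstar\Gamma\) with its canonical flat connection. Since \(S\) is finite and hence of vanishing \(\SobolevW^{1,2}\)-capacity, and since \(g\) is uniformly Euclidean, both operators should be essentially self-adjoint on smooth compactly supported sections over \(W\). The twisting bundle being flat, the Lichnerowicz--Weitzenböck formula is \(\slashed{D}^2=\nabla^\ast\nabla+\tfrac14\scal_g\), so \(\scal_g\geq 0\) gives \(\lVert\slashed{D}\psi\rVert^2\geq\lVert\nabla\psi\rVert^2\); thus the domain of the self-adjoint closure embeds into \(\SobolevW^{1,2}(W,g)=\SobolevW^{1,2}(M,g_0)\) for any smooth background metric \(g_0\), and by Rellich this embedding is compact. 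Hence \(\slashed{D}^{\Cl}\) and \(\slashed{D}_{\mathcal L}\) have discrete spectrum and well-defined index classes \(\operatorname{ind}(\slashed{D}^{\Cl})\in\KO_n(\R)\) and \(\operatorname{ind}(\slashed{D}_{\mathcal L})\in\KO_n(\Cstar\Gamma)\).

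The key step is the identity
\[
  \operatorname{ind}(\slashed{D}_{\mathcal L})-j\bigl(\operatorname{ind}(\slashed{D}^{\Cl})\bigr)=\alpha(M;\Gamma)-j(\alpha(M;1)),
\]
valid for \emph{any} \(\Lp^\infty\)-metric on \(M\) that is smooth outside the finite set \(S\). To prove it, fix a smooth metric \(g_0\) on \(M\) agreeing with \(g\) outside small disjoint geodesic balls around the points of \(S\) (possible since \(g\) is already smooth there and metrics form a convex cone). For \(g_0\), vanishing capacity of \(S\) makes the self-adjoint closure of each operator on \((W,g_0)\) coincide with the corresponding Dirac operator of the closed manifold \((M,g_0)\), whose indices are \(\alpha(M;1)\) and \(\alpha(M;\Gamma)\); so the identity holds for \(g_0\). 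Passing from \(g_0\) to \(g\) changes the metric only inside those balls, and a relative index theorem---cutting \(W\) along the boundary spheres and regluing with smooth caps---localizes the change of index to contributions supported on the capped punctured balls. As those balls are simply connected, \(\mathcal L\) restricts there to a flatly trivial \(\Cstar\Gamma\)-bundle, so each contribution for \(\slashed{D}_{\mathcal L}\) is the \(j\)-image of the one for \(\slashed{D}^{\Cl}\); hence the two cancel in the displayed combination, which therefore takes the same value for \(g\) as for \(g_0\).

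Given these two inputs, the theorem follows. Suppose \(\Ric_g\not\equiv 0\) somewhere on the connected manifold \(W\). By the Weitzenböck formula and \(\scal_g\geq 0\), every spinor in \(\ker\slashed{D}^{\Cl}\) or \(\ker\slashed{D}_{\mathcal L}\) is parallel; but a nonzero parallel spinor---twisted by a flat bundle or not---forces \(\Ric_g\equiv 0\) by the standard curvature argument (flatness of \(\mathcal L\) reducing the twisted case to the untwisted one). Hence both kernels vanish, so by discreteness of the spectrum \(\slashed{D}^{\Cl}\) and \(\slashed{D}_{\mathcal L}\) are invertible and their index classes are zero. Substituting into the identity gives \(\alpha(M;\Gamma)=j(\alpha(M;1))\), contradicting the hypothesis. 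Therefore \(\Ric_g\equiv 0\) on \(M\setminus S\), and in particular \(\scal_g\equiv 0\) there.

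I expect the main obstacle to be the analytic package of the first two paragraphs: turning the Dirac operator of a merely \(\Lp^\infty\)-metric near isolated singularities into a Fredholm operator admitting a relative index formula localized at \(S\)---essential self-adjointness, compactness of the resolvent, and the cut-and-paste identity---with only uniform Euclideanness (and \(\scal_g\geq 0\)) available. It is essential here that the local correction terms are allowed to be nonzero and even to absorb all of \(\alpha(M;1)\); this is precisely what the examples of \cref{thm:main_counterexamples} exploit, and it explains why only the reduced quantity \(\alpha(M;\Gamma)-j(\alpha(M;1))\), rather than \(\alpha(M;\Gamma)\) itself, remains obstructed.
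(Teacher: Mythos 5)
There is a genuine gap here --- in fact two, and they sit in different parts of the argument.

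First, your proof runs index theory directly on the incomplete singular space \((M\setminus S,g)\), and the analytic package you defer to the final paragraph (essential self-adjointness, Fredholmness with discrete spectrum, and an excision formula localizing the index defect at \(S\)) is not merely technical: at \(\Lp^\infty\) regularity parts of it are unavailable and parts are false. Essential self-adjointness of the Dirac operator at an isolated metric singularity is a delicate spectral condition even for exact cones (it depends on the eigenvalues of the link Dirac operator avoiding the interval \((-1/2,1/2)\)), and a ``vanishing capacity'' argument requires quantitative control of \(g\) near \(S\) that uniform Euclideanness does not give; the identification \(\SobolevW^{1,2}(W,g)=\SobolevW^{1,2}(M,g_0)\) likewise needs bounds on first derivatives of \(g\). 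Moreover, the examples of \cref{thm:main_counterexamples} show that the local index contribution of a single \(\Lp^\infty\)-singularity can absorb all of \(\alpha(M;1)\), so any excision argument must actually compute or control these local terms; your proposal only asserts that they cancel in the combination \(\alpha(M;\Gamma)-j(\alpha(M;1))\), which is precisely the content of the theorem and cannot be assumed.

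Second, and independently of the analysis at \(S\), the concluding step ``both kernels vanish, hence both index classes vanish'' is invalid for the Mishchenko-twisted operator. Vanishing of the index in \(\KO_n(\Cstar\Gamma)\) requires invertibility, i.e.\ a spectral gap at \(0\); for a self-adjoint regular operator on a Hilbert \(\Cstar\Gamma\)-module a trivial kernel does not imply a gap, and there is no Rellich compactness for module-valued sections that would upgrade ``no parallel sections'' to one. This is exactly why Ricci-flatness rigidity in the enlargeable/twisted setting is never proved via parallel spinors. The paper instead argues contrapositively and geometrically: if \(\Ric_g\not\equiv0\), one first deforms \(g\) to a uniformly Euclidean metric \(g_1\), still smooth off \(S\), with \(\scal_{g_1}>0\) on \(M\setminus S\) (\cite[Section~7, Proof of Theorem~1.7]{CM:Skeleton}); then a conformal blow-up \(g_\varepsilon=(1+\varepsilon G)^{4/(n-2)}g_1\), with \(G\) the Green's function of a modified conformal Laplacian, yields a \emph{complete smooth} psc metric on \(M\setminus S\); finally \cite[Theorem~C]{cecchini_long_neck} applied to this complete metric gives \(j(\alpha(M;1))=\alpha(M;\Gamma)\). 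All the hard index theory thus takes place on a complete smooth manifold, never at the singularity, and positivity (rather than mere non-negativity plus no parallel spinors) is what furnishes the spectral gap.
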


For example, the condition \(j(\alpha(M;1))\neq\alpha(M;\Gamma)\) is satisfied for enlargeable spin manifolds \cite{HS06,HS07}.
This includes spin manifolds that admit a non-zero degree map to the torus, in particular manifolds of the form \(M = \Torus^n \# N\), where \(N\) is an arbitrary spin manifold.
Also, all \(3\)-manifolds not carrying a psc metric are enlargeable and thus are covered by \cref{thm:Ricci-flatness}, which is the case treated by \textcite[Theorem~1.6]{CM:Skeleton}.
A further application of \cref{thm:Ricci-flatness} is the following.

\begin{corollaryx}
    Let $K$ be a $\mathrm{K}3$-surface, $m\ge0$ and $n\ge1$ such that $4m+n\ge3$. Then any $\Lp^\infty$-metric on $K^m\times \Torus^n$ which is smooth away from a finite set $S$ and satisfies $\scal_g\ge0$ on $(K^m\times \Torus^n)\setminus S$ is Ricci-flat on $(K^m\times \Torus^n)\setminus S$.
\end{corollaryx}

See \cite[Remarks~1.10--1.12]{cecchini_long_neck} for further discussions of examples.
Conversely, the condition from \cref{thm:Ricci-flatness} is never satisfied for simply connected spin-manifolds and so avoids the counterexamples of \cref{thm:main_counterexamples}.
It is essentially the most general such condition currently conceivable in the spin setting (compare~\cite[Conjecture~1.5]{Schick:ICM}).

Shortly after the first version of our present manuscript appeared, \textcite[Theorem 1.9]{wang2024scalarcurvaturerigidityspheres} presented a variant of \cref{thm:Ricci-flatness} involving a more general class of singular subsets $S\subset M$ provided that \(M\) admits a suitable map to the torus.

We note that the proof of \cref{thm:Ricci-flatness} follows the same lines as in the works of \textcite{kazaras2019desingularizing,CM:Skeleton}. 
Our contribution is identifying the correct KO-theoretic obstruction to complete psc metrics which already appeared in work of \textcite{cecchini_long_neck}, see also \textcite[\S 3.2]{CZ-generalized-Callias}.
Moreover, as in the classical setting, the positive mass theorem with \(\Lp^\infty\)-point singularities can be reduced to a corresponding psc obstruction statement for closed manifolds of the form \(\Torus^n \# N\).
Indeed, in \cite[Proof of Corollary~B]{kazaras2019desingularizing}, Kazaras extended Lohkamp's argument~\cite[Theorem~6.1]{Lohkamp-hammocks} to uniformly Euclidean metrics.
Therefore, \cref{thm:Ricci-flatness} directly implies the following.

\begin{corollaryx}
Let $(M,g)$ be an $n$-dimensional complete asymptotically flat Riemannian spin manifold, where $g$ is an \(\Lp^\infty\)-metric which is smooth outside a finite set $S\subset M$. 
If $\scal_g\geq0$ on $M\setminus S$, then the ADM mass of each asymptotically flat end is non-negative.
\end{corollaryx}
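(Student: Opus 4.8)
The plan is to reduce the statement to \cref{thm:Ricci-flatness} via the Lohkamp--Kazaras compactification scheme, as indicated in the paragraph preceding the corollary. Suppose, for contradiction, that $(M,g)$ has an asymptotically flat end $E$ of negative ADM mass. All the modifications below take place near infinity in $E$ and on compact regions, which we may keep disjoint from the finite set $S$ since $S$ is compact; in particular the behavior of $g$ near $S$ is never touched. Applying Kazaras's extension \cite[Proof of Corollary~B]{kazaras2019desingularizing} of Lohkamp's hammock argument \cite[Theorem~6.1]{Lohkamp-hammocks} to the uniformly Euclidean setting, the negative mass of $E$ is converted into a uniformly Euclidean metric $h$ on a closed \emph{spin} manifold $X$ of the form $\Torus^n \# N$ --- where $N$ is essentially $M$ with the end $E$ capped off, so $N$ and hence $X$ inherit the spin structure --- such that $h$ is still smooth outside the finite set $S$, satisfies $\scal_h \geq 0$ on $X \setminus S$, and satisfies $\scal_h > 0$ on a non-empty open subset of $X \setminus S$. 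Concretely, one uses that after Lohkamp's deformation the metric is exactly Euclidean outside a large coordinate ball in $E$, excises that exterior region, and glues in the complement of a ball in a large flat torus along a collar of the cutting sphere; the two metrics match isometrically there, so the assembled metric is again uniformly Euclidean (with a possibly larger constant $\Lambda$).

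It then remains to observe that $X = \Torus^n \# N$ carries a degree-one map to $\Torus^n$ and is therefore enlargeable, so that $j(\alpha(X;1)) \neq \alpha(X;\Gamma)$ for $\Gamma = \pi_1(X)$, by the remark following \cref{thm:Ricci-flatness}. Applying \cref{thm:Ricci-flatness} to the closed spin manifold $X$ with its $\Lp^\infty$-metric $h$, which is smooth outside the finite set $S$ and has $\scal_h \geq 0$ on $X \setminus S$, we conclude $\Ric_h \equiv 0$ on $X \setminus S$, hence also $\scal_h \equiv 0$ on $X \setminus S$. This contradicts the strict positivity of $\scal_h$ on a non-empty open subset of $X \setminus S$. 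The contradiction shows that every asymptotically flat end of $(M,g)$ has non-negative ADM mass.

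The step I expect to require the most care is the first one: checking that Kazaras's $\Lp^\infty$-version of Lohkamp's surgery-and-conformal deformation can be run so as to (a) leave $S$ and the metric near $S$ entirely unaffected, (b) preserve the uniformly Euclidean bound, and (c) deliver genuine strict positivity of the scalar curvature somewhere in the regular region, not merely non-negativity. Points (a) and (b) are bookkeeping once one notes that all the cutting, gluing, and conformal changes happen on compact sets disjoint from $S$ and near infinity in $E$, where $g$ is already close to Euclidean; point (c) is the essential content of Lohkamp's theorem and is what Kazaras's argument transports to the uniformly Euclidean category. Once these are in place, the compactification into a manifold of the form $\Torus^n \# N$ and the invocation of \cref{thm:Ricci-flatness} are formal.
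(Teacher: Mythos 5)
Your proposal is correct and follows exactly the route the paper intends: the paper's own ``proof'' is just the observation that Kazaras's uniformly Euclidean extension of Lohkamp's compactification reduces negative mass to a metric with $\scal \geq 0$ (and $>0$ somewhere) outside $S$ on an enlargeable closed spin manifold $\Torus^n \# N$, to which \cref{thm:Ricci-flatness} applies. Your extra step of contradicting $\Ric \equiv 0$ via the strict positivity of the scalar curvature on an open set is a valid way to close the argument and matches how the paper handles the analogous endgame elsewhere.
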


The paper is organized as follows.
In \cref{sec:not-so-schoen}, we establish \cref{thm:main_counterexamples} and discuss \(\Lp^\infty\)-metrics on exotic spheres and tori.
\cref{sec:non-smoothability} is devoted to the proof of \cref{thm:paulasquestion}.
Finally, in \cref{sec:obstructions} we prove \cref{thm:Ricci-flatness}.

\subsection*{Acknowledgements}
We thank Arthur Bartels, Paula Burkhardt-Guim, Otis Chodosh, Johannes Ebert, Hans-Joachim Hein, Sven Hirsch and Christos Mantoulidis for insightful conversations.
SC gratefully acknowledges the hospitality of Mathematics Münster during his visit as a Young Research Fellow, which significantly contributed to the development of this paper.

\section{Counterexamples to Schoen's conjecture}\label{sec:not-so-schoen}
In this section, we prove \cref{thm:main_counterexamples}.
The proof is based on the following classification result which stands in contrast to \cite[Theorem A]{stolz_simply-connected-manifolds-of-psc} and complements \cite[Corollary C]{gromov-lawson_classification-of-simply-connected}.

\begin{proposition}\label{prop:linfty-on-simply-connected-manifolds}
    Every closed simply connected manifold $M$ of dimension \(n \geq 5\) admits an $\Lp^\infty$-metric $g$ which lies in \(\SobolevW^{2,p/2}\) for every \(p < n\), is smooth away from a point $\ast\in M$, and whose scalar curvature is uniformly positive on $M\setminus\{\ast\}$.
\end{proposition}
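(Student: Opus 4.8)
The plan is to build $g$ by gluing a genuine psc metric on $M$ minus a small ball to a ``shrunken cone'' metric on the ball, placing the conical singularity at the prescribed point. First I would dispose of $n=5$: every closed simply connected $5$-manifold already admits a smooth psc metric (spin case: $\alpha(M)\in\KO_5(\R)=0$, so \cite{stolz_simply-connected-manifolds-of-psc} applies; non-spin case: by the Smale--Barden classification it is a connected sum of manifolds that visibly carry psc), and a smooth psc metric is an $\Lp^\infty$-metric of the required kind. So assume $n\geq 6$, fix $p\in M$, a closed coordinate ball $B\ni p$, and set $M^\circ:=M\setminus\mathrm{int}(B)$, a compact simply connected manifold with $\partial M^\circ\cong\Sphere^{n-1}$.

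The technical core is to produce a psc metric $g^\circ$ on $M^\circ$ which on a boundary collar $[1,1+\delta)\times\Sphere^{n-1}$ equals the cone frustum $dr^2+(\rho r)^2\sigma$, where $\sigma$ is the unit round metric on $\Sphere^{n-1}$ and $\rho\in(0,1)$. Granting this, on $B\cong D^n$ put $g_B:=dr^2+(\rho r)^2\sigma$ for $r\in(0,1]$ (with $p$ at $r=0$); it agrees with the collar of $g^\circ$ near $\partial B$, so $g:=g^\circ\cup_{\partial B}g_B$ is a metric on $M$ that is smooth on $M\setminus\{p\}$ and equals a metric cone over a small round sphere near $p$. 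On the cone region one computes $\scal_g=(n-1)(n-2)(\rho^{-2}-1)\,r^{-2}>0$, which blows up like $\operatorname{dist}(\cdot,p)^{-2}$ as $r\to 0$; on the compact set $M^\circ$ it is continuous and positive; hence $\scal_g$ is uniformly positive on $M\setminus\{p\}$. For regularity, write $g_B$ in the linear coordinates $x$ with $r=|x|$: one gets $g_{ij}(x)=\rho^2\delta_{ij}+(1-\rho^2)\,x_ix_j/|x|^2$, with eigenvalues in $[\rho^2,1]$, so $g$ is uniformly Euclidean; being homogeneous of degree $0$ and smooth away from $0$, it satisfies $|\partial^2 g|\lesssim|x|^{-2}$, whence $\int_{|x|\le1}|\partial^2 g|^{p/2}\,dx\lesssim\int_0^1 r^{-p}\,r^{n-1}\,dr<\infty$ precisely for $p<n$, giving $g\in\SobolevW^{2,p/2}$.

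It remains to construct $g^\circ$, which is where the surgery-theoretic classification enters. Starting from $\Sphere^{n-1}\times[0,1]$ with a round-times-interval metric, one realizes $M^\circ$ by attaching the handles of a handle decomposition; since $M^\circ$ is simply connected and $n\geq 6$, handle trading puts all handle indices in $[2,n-2]$. Splitting at the middle index and dualizing the upper half, one writes $M^\circ=A\cup_\Sigma B$ where $A$ is built from a $D^n$ and $B$ from $\Sphere^{n-1}\times[0,1]$ using only surgeries of codimension $\ge 3$ on the evolving boundary (for index $k$ the codimension is $n-k\geq\lceil n/2\rceil\geq 3$ in $A$, and $\geq\lfloor n/2\rfloor+1\geq 4$ in the dual of $B$). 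Thus both $A$ and $B$ carry psc metrics by the Gromov--Lawson--Schoen--Yau surgery construction, one can keep the distinguished boundary $\Sphere^{n-1}\subset B$ round, and then reshape its collar to the frustum $dr^2+\phi(r)^2\sigma$ by letting the profile $\phi$ interpolate from a constant to the linear profile $\rho r$; the warped-product formula $\scal=\scal_\sigma\phi^{-2}-2(n-1)\phi''\phi^{-1}-(n-1)(n-2)(\phi')^2\phi^{-2}$ shows this preserves psc once the link is shrunk enough and the collar is taken long enough.

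The obstacle is the gluing of $A$ to $B$ along $\Sigma$: the two psc metrics induced on $\Sigma$ need not be psc-concordant, and the failure to match them is exactly the index obstruction that prevents $M$ from carrying a smooth psc metric --- if this gluing were always possible, capping the round boundary of $M^\circ$ by a hemisphere would make \emph{every} simply connected $M^n$ psc, contradicting \cite{stolz_simply-connected-manifolds-of-psc}. What saves the construction is that we do not close $M$ up smoothly: $M$ is closed off by a cone over a psc homotopy sphere --- arranged to be the standard $\Sphere^{n-1}$ so that the underlying smooth manifold is exactly $M$ --- and the cone point can absorb this obstruction precisely because the $\hat{A}$-form of a conical metric blows up like $r^{-n}$ near the vertex, so its integral diverges and no index-theoretic contradiction can be drawn. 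I expect the bulk of the work to lie in isolating the exact surgery statement that reduces the construction to filling a single psc round sphere by a conical disk in the explicit $dr^2+\phi(r)^2\sigma$ model above; the remaining checks are routine warped-product computations. This is also the sense in which the result ``complements \cite{gromov-lawson_classification-of-simply-connected}'' while ``standing in contrast to \cite{stolz_simply-connected-manifolds-of-psc}'': the unobstructedness here is the same phenomenon that makes punctured simply connected manifolds carry complete psc metrics, now used in the incomplete (uniformly Euclidean) direction.
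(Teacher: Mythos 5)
Your global architecture is the same as the paper's (delete a ball, find a collared psc metric on the complement, cone it off at the puncture, check uniform Euclideanness and \(\SobolevW^{2,p/2}\)-regularity of the cone by the \(r^{-2}\) decay of second derivatives), and your \(n=5\) reduction and the regularity/curvature computations for the model cone are fine. But the technical core of your construction is set up in a way that cannot work, and your attempted repair does not address the actual problem.

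You require the collared metric \(g^\circ\) on \(M^\circ\) to end in a frustum \(\D{r}^2+(\rho r)^2\sigma\) over the \emph{round} sphere. If such a \(g^\circ\) existed, you could cap the round boundary with a torpedo and obtain a globally smooth psc metric on \(M\); so for exactly the manifolds relevant to \cref{thm:main_counterexamples} (e.g.\ those with \(\alpha(M;1)\neq 0\)) this step is impossible, as you yourself observe. Your proposed escape — split \(M^\circ=A\cup_\Sigma B\) at the middle handle index and let the cone point ``absorb'' the failure of the two psc metrics to match along \(\Sigma\) — is not an argument: \(\Sigma\) lies in the interior of \(M^\circ\), far from the singular point \(p\), and the metric must be genuinely smooth and psc near \(\Sigma\); a singularity at \(p\) cannot help you concord two metrics on \(\Sigma\), and the heuristic about the diverging \(\hat A\)-form only explains why no index-theoretic \emph{contradiction} arises, not how to perform the gluing. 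The correct fix, and the route the paper takes, is to drop the roundness requirement entirely: after first making \(M\) \(2\)-connected by \(2\)-surgeries in the spin case (and citing \cite[Corollary C]{gromov-lawson_classification-of-simply-connected} outright in the non-spin case, which your \(n\geq 6\) argument also does not treat), Gajer's form of the surgery theorem produces a collared psc metric \(h+\D{t}^2\) on \(M\setminus\Ball^n\) with \(h\) some a priori non-round psc metric on \(\Sphere^{n-1}\); the \(2\)-surgeries are then reversed (codimension \(3\)). One then cones over \emph{this} \(h\), i.e.\ interpolates to \((t/\lambda)^2h+\D{t}^2\). The point of \cref{lem:Sobolev_regular} is precisely that the cone over an arbitrary metric on \(\Sphere^{n-1}\) is still uniformly Euclidean and in \(\SobolevW^{2,p/2}\) for \(p<n\), because any \(h\) is bi-Lipschitz to the round metric; the non-roundness of \(h\) is where the psc obstruction ``hides''. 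Reducing to a cone over the round sphere, as you insist on doing, is not merely harder — it is false.
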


The idea of the proof is the following: First, we remove an open ball. On this new manifold with boundary, there is a \emph{collared} psc metric $g$, that is, $g=h + \D{t}^2$ near the boundary. 
Then we interpolate this to a conical metric $(t/\lambda)^2 h +\D{t}^2$ which has an $\Lp^\infty$-singularity at $t=0$.
We start with the following lemma.

\begin{lemma}\label{lem:collared_metric}
    Let \(M\) be a closed simply connected spin manifold of dimension \(n \geq 5\).
    Let \(\Ball^n \subset M\) be an embedded open ball and \(M' = M \setminus \Ball^n\).
    Then there exists a collared psc metric \(g\) on \(M'\).
\end{lemma}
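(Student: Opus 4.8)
The plan is to reduce to the classification of closed simply connected spin manifolds of positive scalar curvature, after first passing to a connected sum $M \# N$ with a carefully chosen auxiliary manifold $N$ whose contribution can then be removed again by surgery. One cannot hope to build a collared psc metric on $M' = M \setminus \Ball^n$ directly out of a ball by iterated Gromov--Lawson surgery along a handle decomposition, because $M$ (hence $M'$) generically has handles of index $n-2$, corresponding to surgeries of codimension $2$ in the boundary hypersurface, where the Gromov--Lawson technique does not apply; converting these into codimension-$\geq 3$ surgeries is exactly what the detour through $M\#N$ achieves, and is the step I expect to be the crux.

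First, since the Hitchin genus $\Omega^{\mathrm{Spin}}_n\to\KO_n(\R)$ is surjective, I would pick a closed spin $n$-manifold representing $-\alpha(M)\in\KO_n(\R)$ and perform surgeries below the middle dimension (codimension $\geq 3$, hence preserving the spin bordism class and $\alpha$) to make it $2$-connected; call the result $N$, so $N$ is a $2$-connected closed spin $n$-manifold with $\alpha(N)=-\alpha(M)$. (If $\alpha(M)=0$---automatic whenever $n\not\equiv0,1,2,4\mod 8$, in particular for $5\leq n\leq 7$---take $N=\Sphere^n$; and $\alpha(M)\neq0$ forces $n\geq8$.) Then $M\#N$ is closed, simply connected, spin, of dimension $n\geq5$, with $\alpha(M\#N)=\alpha(M)+\alpha(N)=0$, so by the theorem of Gromov--Lawson~\cite{gromov-lawson_classification-of-simply-connected} and Stolz~\cite{stolz_simply-connected-manifolds-of-psc} it admits a smooth psc metric. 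By the usual local flexibility of psc metrics one may assume this metric agrees with a torpedo near a point $q$ lying inside the top handle of a fixed handle decomposition of the $N$-summand; excising the round ball around $q$ yields a collared psc metric on $X:=(M\#N)\setminus\Ball^n$ with product collar $\Sphere^{n-1}\times[0,1]$.

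With the ball chosen this way one has $X=M'\cup_{\Sphere^{n-1}}N'''$, where $N''':=N\setminus(\text{two disjoint balls})$ is a cobordism from $\Sphere^{n-1}$ to $\Sphere^{n-1}$. Because $N$ is $2$-connected and closed of dimension $n\geq8$, it has a handle decomposition consisting of one $0$-handle, one $n$-handle, and handles of index $k$ with $3\leq k\leq n-3$ (no $1$- or $2$-handles by simple connectivity together with $H_2(N)=0$, and dually no $(n-1)$- or $(n-2)$-handles). Removing the $0$- and $n$-handles exhibits $N'''$ as the trivial cobordism $\Sphere^{n-1}\times[0,1]$ with handles of index in $[3,n-3]$ attached. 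I would then undo these handles one at a time, in order of decreasing index: undoing a handle of index $k$ is a surgery on its belt sphere $\Sphere^{n-k-1}$ inside an $(n-1)$-dimensional hypersurface, which has codimension $k\geq3$, so by the version of the Gromov--Lawson surgery theorem for manifolds with boundary (propagating a collared psc metric) the resulting manifold again carries a collared psc metric. After all handles of $N'''$ are undone, $X$ has become $M'\cup_{\Sphere^{n-1}}(\Sphere^{n-1}\times[0,1])\cong M'$, which therefore carries a collared psc metric.

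The genuinely delicate points are thus the two I flagged: that $N$ can be chosen $2$-connected, so that $N'''$ is assembled purely from handles in the stable range $[3,n-3]$, and that the handle-cancellation surgeries can be run while keeping the metric collared, which is where the boundary version of Gromov--Lawson surgery (rather than the classical closed version) is needed. The degenerate case $\alpha(M)=0$ just reduces to: $M$ admits psc, one puts the metric in standard form near a point, and removes a ball.
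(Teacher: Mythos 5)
Your first two steps (choosing $N$ with $\alpha(N)=-\alpha(M)$, getting a psc metric on $M\#N$ via Gromov--Lawson--Stolz, and excising a torpedo to obtain a collared psc metric on $X=(M\#N)\setminus\Ball^n$) are fine, but the final passage from $X$ back to $M'$ is a genuine gap, and no argument of that shape can close it. First, \enquote{undoing} a $k$-handle of $N'''$ is not a surgery on the ambient $n$-manifold: surgery on the belt sphere is an operation on the boundary \emph{hypersurface}, and realizing it as a modification of $X$ means gluing on the trace of that surgery, i.e.\ \emph{attaching} a further handle rather than deleting one; Gajer's boundary version of Gromov--Lawson propagates collared psc metrics under handle attachment of index $\leq n-3$, never under handle removal. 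Second, and decisively, there is a cobordism obstruction: any interior surgery performed away from the boundary collar preserves the class $[X\cup_{\Sphere^{n-1}}\Disk^n]\in\Omega^{\mathrm{spin}}_n$, which for $X$ equals $[M\#N]=[M]+[N]$ and for $M'$ equals $[M]$. Since $\alpha(N)=-\alpha(M)\neq0$, these classes differ, so $M'$ cannot be obtained from $X$ by \emph{any} sequence of codimension-$\geq 3$ interior surgeries. Concretely, when $\alpha(M)\neq0\in\Z$ (e.g.\ $M$ a product of two $\mathrm{K}3$ surfaces), $N$ necessarily carries nontrivial middle-dimensional homology, so $N'''$ is not an $h$-cobordism and its handles cannot be cancelled at all. (When $\alpha(M)$ is $2$-torsion you could take $N$ to be an exotic sphere, in which case $N'''$ \emph{is} an $h$-cobordism and $X\cong M'$ outright; but that only covers $n\equiv1,2\bmod 8$.)

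The repair is to run the handle argument \emph{forwards} rather than backwards, which is what the paper does and which makes both $N$ and Stolz's theorem unnecessary for $n\geq6$. Perform $2$-surgeries on $M$ away from $\Ball^n$ (the relevant $2$-spheres have trivial normal bundle by the spin condition) to obtain a $2$-connected manifold $M_2$; then $M_2'=M_2\setminus\Ball^n$ is built from a disk by attaching handles of index $\leq n-3$ only, so Gajer's corollary produces a collared psc metric on $M_2'$ by extending a torpedo metric on $\Disk^n$ handle by handle---no closed psc manifold is ever invoked, which is precisely why the $\alpha$-invariant of $M$ is irrelevant here. Reversing the $2$-surgeries is then a sequence of honest interior surgeries of codimension $3$ (unobstructed, since $M$ and $M_2$ \emph{are} spin cobordant), performed away from the collar, and yields the collared psc metric on $M'$. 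Note also that your framing of what the detour through $M\#N$ achieves is off: $M\#N$ has exactly the same index-$(n-2)$ handles as $M$, so the connected sum does not convert the problematic codimension-$2$ surgeries into codimension-$\geq3$ ones; that conversion is accomplished by the $2$-surgery trick, not by changing the bordism class.
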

\begin{proof}
    First, we perform $2$-surgeries on $M$ away from $\Ball^n$ to obtain $M_2$ which is $2$-connected.
    This is possible as every embedded $2$-sphere has trivial normal bundle because of the spin condition.
    
    We write $M_2'\coloneqq M_2\setminus \Ball^n$, and we observe that the inclusion $\partial M_2'=\Sphere^{n-1}\hookrightarrow M_2'$ is $2$-connected by construction. 
    If $n\ge6$, the surgery theorem of 
    \textcite{gromov-lawson_classification-of-simply-connected,Shoen-on-the-structure-of-manifolds} in its form presented in~\cite[Corollary on p. 181]{gajer_riemannian-metrics-of-psc-on-compact-manifolds-with-boundary} implies that there exists a psc metric $g_2$ on $M_2'$ which is collared, that is, $g_2=h+\D{t}^2$ on a collar $\partial M_2'\times [0,\varepsilon)\subset M_2'$.
    Since the reverse surgery of a $2$-surgery has codimension $3$, we can reverse the surgeries from the beginning to obtain a collared psc metric $g$ on $M' = M\setminus \Ball^n$ which agrees with $g_2$ near $\partial M' = \partial M_2'$.

    If on the other hand $n=5$, $M$ itself already admits a psc metric by \cite[Theorem A]{stolz_simply-connected-manifolds-of-psc}\footnote{This also follows directly from \cite[Theorem B]{gromov-lawson_classification-of-simply-connected} and the fact that $\Omega_5^{\mathrm{spin}}=0$.} which can be deformed to equal a torpedo-metric on a neighbourhood of an embedded disk $\Disk^5 \subset M$ (\cite[Theorem 1.1]{chernysh_on-the-homotopy-type-of}, see also \cite[Theorem 1.2]{ebertfrenck}).
    Removing the interior of this disk will yield a psc metric on $M'$ which is collared.
\end{proof}

\begin{remark}
    The statement of \cref{lem:collared_metric} is also true if $M$ is simply connected and non-spin:
    In this case, \cite[Corollary C]{gromov-lawson_classification-of-simply-connected} yields a psc metric on $M$ which---as in the proof above (using \cites[Theorem 1.1]{chernysh_on-the-homotopy-type-of}[Theorem 1.2]{ebertfrenck})---can be deformed to be a torpedo-metric on an embedded disk $\Disk^n\subset M$.
    Deleting the interior of this disk with the torpedo-metric yields the desired collared metric on the complement.
\end{remark}

\begin{proof}[Proof of \cref{prop:linfty-on-simply-connected-manifolds}]
    If $M$ does not admit a spin structure then by \cite[Corollary C]{gromov-lawson_classification-of-simply-connected} there even exists a smooth positive scalar curvature metric on $M$. Therefore, we may assume that $M$ is spin. 

    In this case let \(g\) be a collared psc metric on \(M' = M \setminus \Ball^n\) provided by \cref{lem:collared_metric}, where \(\Ball^n\) is some embedded ball around the point \(p \in M\).
    Denote the restriction of \(g\) to the boundary \(\partial M'\) by \(h\).
     Then \(h\) has uniformly positive scalar curvature due to compactness.
    Let $C\coloneqq\inf_{x\in\partial M'}\scal_h(x)>0$.
    Now, let $R>1$ and $\varepsilon>0$ be such that there exists a smooth, monotonically decreasing function $\alpha\colon \R\to[0,1]$ (see \cref{fig:aux-and-warping-function}) such that $\alpha|_{(-\infty,-R+\varepsilon]} = 1$ and $\alpha|_{[-\varepsilon,\infty)}=0$ with 
    \[-\frac12\sqrt{\frac{C}{4(n-1)(n-2)}}\le\alpha'(t)\le0\quad\text{ and }\quad|\alpha''(t)|\le\frac{C}{8(n-1)}.\]
    Furthermore, let $\lambda\in\mathbb{R}$ be such that 
    \[\lambda\ge\max\left\{R, 2\sqrt{\frac{4(n-1)(n-2)}{C}}\right\} > 1.\]
    \begin{figure}[ht]
        \resizebox{\textwidth}{!}{
            \begin{tikzpicture}
                \node at (0,0) {\includegraphics[width=1\textwidth, trim=60 100 10 380, clip]{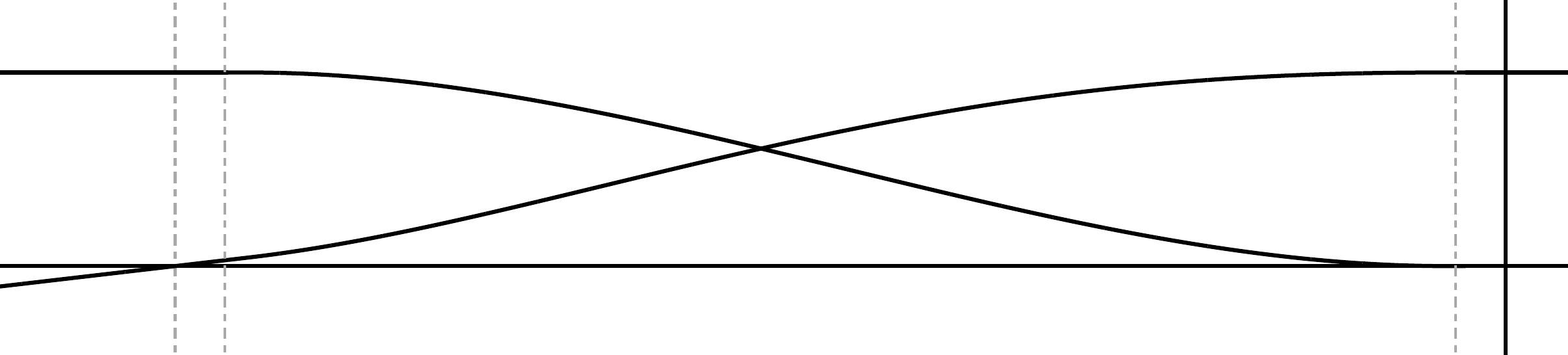}};
                \node at(-5.8,-1.1) {$-R$};
                \node at(-4.3,-1.1) {$-R+\varepsilon$};
                \node at(5.4,-1.1) {$-\varepsilon$};
                \node at(6.8,1.2) {$1$};
                \node at(3,1.3) {$f$};
                \node at(-3,1.3) {$\alpha$}; 
            \end{tikzpicture}
        }
        \caption{The auxiliary function $\alpha\colon\R\to [0,1]$ and the warping function $f\colon \R\to (-\infty,1]$.}\label{fig:aux-and-warping-function}
    \end{figure}

    \noindent We define $f(t)\coloneqq \alpha(t)\frac{t+R}{\lambda} + (1-\alpha(t))$ and consider the conical metric 
    \[
        \gcon =  f(t)^2 h + \D{t}^2
    \]
    on $\Sphere^{n-1}\times (-R,0]$.
    Its scalar curvature is given by
    \begin{align}\label{eq:scal-of-g-conical}
        \scal_{\gcon} &= \frac{1}{f(t)^2}\left(\scal_h - 2(n-1)f(t)f''(t) - (n-1)(n-2)f'(t)^2\right).
    \end{align}
    We note that, by our choices, we have for $t>-R$
    \begin{align}
        f(t) &= -\underbrace{\alpha(t)\left(1-\frac{t+R}{\lambda}\right)}_{\in [0,1)} + 1 \in (0,1]\notag\\
        |f'(t)| &= \left|\alpha'(t)\left(\frac{t+R}{\lambda}-1\right) + \alpha(t)\frac{1}{\lambda}\right|\le |\alpha'(t)|\cdot\underbrace{\left|\frac{t+R}\lambda -1\right|}_{\le1} + \underbrace{|\alpha(t)|}_{\le1}\cdot\left|\frac1\lambda\right|\notag\\
            &\le -\alpha'(t) +\frac1\lambda \le \sqrt{\frac{C}{4(n-1)(n-2)}}\notag\\
            &\Rightarrow (n-1)(n-2)f'(t)^2\le \frac{C}{4}\label{eq:first-derivative-of-f}
    \end{align}
    \begin{align}
        f''(t) &= \alpha''(t)\left(\frac{t+R}{\lambda}-1\right) + \underbrace{\frac{2\alpha'(t)}\lambda}_{\le0}\le |\alpha''(t)|\left(1-\frac{t+R}{\lambda}\right)\notag\\
            &\le |\alpha''(t)| \le \frac{C}{8(n-1)}\notag\\
            &\Rightarrow 2(n-1)f(t)f''(t) \le 2(n-1)f''(t)\le \frac{C}4\label{eq:second-derivative-of-f}.
    \end{align}
    Inserting \cref{eq:first-derivative-of-f} and \cref{eq:second-derivative-of-f} into \cref{eq:scal-of-g-conical}, we obtain:
    \[
        \scal_{\gcon} = \frac{1}{f(t)^2}\Bigl(\scal_h - 2(n-1)f(t)f''(t) - (n-1)(n-2)f'(t)^2 \Bigr)\ge \frac C2>0.
    \]
    Note that $f(-R)=0$ and, since $\gcon = h+\D{t}^2$ for $t>-\varepsilon$, we can extend the metric $g$ on $M'$ by $\gcon$ to a metric $\hat g$ on $M$ which in general has a true singularity at the point \(\ast \in M\) corresponding to $t=-R$.
    A visualization of $\hat g$ can be found in \cref{fig:counterexample}.
    Near the singularity we have $\scal_{\hat g}>\frac C2$, so \(\hat{g}\) has uniformly positive scalar curvature on all of \(M \setminus \{\ast\}\).
    
    It remains to verify that $\hat g$ is uniformly Euclidean and satisfies the desired Sobolev regularity on all of \(M\).
    Since the metric is smooth away from the singularity, it suffices to show that the metric has these properties on some neighborhood including the singularity, where the metric is precisely conical.
    This follows from a direct local computation which we record in \cref{lem:Sobolev_regular} below.
    \begin{figure}
        \centering
        \begin{tikzpicture}
            \node at (0,0) {\reflectbox{
                    \includegraphics[width=.6\textwidth]{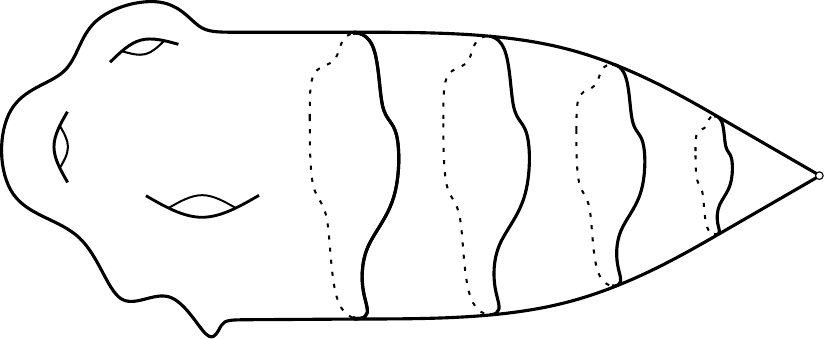}
                }
            };
            \node at (2.3,-2.2) {
                $\underbrace{\quad\qquad\qquad\qquad\qquad}_{M\setminus \Ball^n}$
            };
            \node at (-1.4,-2.2) {
                $\underbrace{\quad\qquad\qquad\qquad\qquad}_{\Sphere^{n-1}\times [-R+\varepsilon,0]}$
            };
            \node at (-5,-1) {
                \footnotesize ${\Sphere^{n-1}\times (-R,-R+\varepsilon]}$
            };
            \node at (2,0.5) {$g$};
            \node[fill=white, minimum height=2em] at (-0.8,0) {$\gcon$};
            \node at (-1.4,1.8) {transition metric};
            \node at (-5,0.8) {$\left(\frac{t+R}{\lambda}\right)^2 h + \D{t}^2$};
        \end{tikzpicture}
        \caption{The metric $\hat g$ on $M$: On $M\setminus \Ball^n$ we have a psc metric $g$ which is collared, then there is a transition region $\Sphere^{n-1}\times [-R+\varepsilon,0]$ and on $\Sphere^{n-1}\times(-R,R+\varepsilon]$ the metric $\gcon$ is the linear scaling of $h$.}
        \label{fig:counterexample}
    \end{figure}
\end{proof}

\begin{lemma}\label{lem:Sobolev_regular}
    Let \(h\) be a Riemannian metric on \(\Sphere^{n-1}\).
    Then the metric \(g=t^2h+\D{}t^2\) on \(\Disk^n\setminus\{0\} \cong \Sphere^{n-1}\times (0,1]\), identified via \(x \mapsto (x/\lvert x \rvert,\lvert x \rvert)\), is uniformly Euclidean on the entire disk \(\Disk^n\) and its coefficients lie in \(\SobolevW^{2,p/2}(\Disk^n) \cap \Lp^\infty(\Disk^n)\) for every $p<n$.
\end{lemma}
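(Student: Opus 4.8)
The plan is to work in the coordinates $x \in \Disk^n \setminus \{0\}$ obtained by identifying $\Disk^n \setminus \{0\} \cong \Sphere^{n-1} \times (0,1]$ via $x \mapsto (x/|x|, |x|)$, writing $r = |x|$. In these coordinates the metric cone $g = r^2 h + \D{r}^2$ should be compared directly with the flat metric $g_0 = r^2 g_{\mathrm{round}} + \D{r}^2$, which is just the Euclidean metric on $\Disk^n$ in polar form. Since $h$ is a fixed smooth metric on the compact sphere $\Sphere^{n-1}$, there is $\Lambda > 0$ with $\Lambda^{-1} g_{\mathrm{round}} \le h \le \Lambda g_{\mathrm{round}}$ on $\Sphere^{n-1}$; multiplying the angular part by $r^2$ and leaving the $\D{r}^2$ part alone preserves this two-sided bound, so $\Lambda^{-1} g_0 \le g \le \Lambda g_0$ on $\Disk^n \setminus \{0\}$, hence a.e.\ on $\Disk^n$. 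This gives the uniformly Euclidean claim and in particular shows the coefficients $g_{ij}$ are in $\Lp^\infty(\Disk^n)$.

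For the Sobolev regularity I would pass to genuine Cartesian coordinates. Fix a local coordinate chart $\theta = (\theta^1, \dots, \theta^{n-1})$ on an open subset of $\Sphere^{n-1}$; then $x = r\,\omega(\theta)$ where $\omega \colon U \to \Sphere^{n-1} \subset \R^n$ is the smooth embedding, and one computes that the Euclidean components $g_{ij}(x)$ of $g$ are finite sums of terms of the form $\phi(x/|x|)\,\psi_i(x/|x|)\,\psi_j(x/|x|)$ (coming from the angular part $r^2 h$, where each factor of $r$ is absorbed into one of the two ``$\partial_i r = x_i/|x|$'' or ``$r\,\partial_i \theta^a$'' Jacobian entries) plus the contribution of $\D{r}^2$, which contributes $x_i x_j / |x|^2$. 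The upshot is that every $g_{ij}$ is a finite sum of functions that are homogeneous of degree $0$ and smooth on $\R^n \setminus \{0\}$ — equivalently, $g_{ij}(x) = G_{ij}(x/|x|)$ for smooth functions $G_{ij}$ on $\Sphere^{n-1}$. I would state and use this normal form as the key reduction.

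For a degree-$0$ homogeneous function $u(x) = U(x/|x|)$ with $U$ smooth, each first derivative $\partial_k u$ is homogeneous of degree $-1$ and each second derivative $\partial_k \partial_l u$ is homogeneous of degree $-2$, both still smooth on $\R^n \setminus \{0\}$; hence $|\nabla^2 u(x)| \le C |x|^{-2}$ near the origin. Then $\int_{\Disk^n} |\nabla^2 u|^{p/2}\,dx \le C \int_{\Disk^n} |x|^{-p}\,dx < \infty$ precisely when $p < n$, by the standard computation $\int_{\Ball^n} |x|^{-p}\,dx = \omega_{n-1}\int_0^1 \rho^{n-1-p}\,d\rho < \infty \iff p < n$. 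The same (easier) estimate with exponent $p/2 < p < n$ handles $\nabla u \in \Lp^{p/2}$, and $u \in \Lp^\infty$ is immediate; so $u \in \SobolevW^{2,p/2}(\Disk^n)$ — one also checks that the pointwise derivatives on $\R^n \setminus \{0\}$ are the distributional derivatives on $\R^n$, which follows because the ``bad set'' $\{0\}$ has zero capacity in the relevant range (or by a direct cutoff argument: integrate by parts against a test function, using a radial cutoff $\chi_\delta$ vanishing near $0$, and let $\delta \to 0$, the boundary terms being controlled by the homogeneity bounds on a sphere of radius $\delta$ whose volume is $O(\delta^{n-1})$). Applying this to each $g_{ij}$ gives the claim.

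I expect the main obstacle to be purely bookkeeping: verifying carefully that the Euclidean components $g_{ij}$ really are degree-$0$ homogeneous and smooth off the origin, i.e.\ that the factors of $r$ from the cone scaling exactly cancel the factors of $r^{-1}$ from the Jacobian of polar coordinates. The cleanest way to see this without a messy chart computation is to note that the dilation $\delta_s \colon x \mapsto sx$ satisfies $\delta_s^* g = s^2\bigl((r/s)^2 h + \D(r/s)^2\bigr)\circ(\text{rescale}) = g$ — more precisely $g$ is exactly invariant under the radial scaling that fixes the cross-section, so its Cartesian components are scale-invariant, i.e.\ homogeneous of degree $0$ — and it is smooth on $\R^n \setminus \{0\}$ since there it agrees with a smooth Riemannian metric (the cone over a smooth metric is smooth away from the cone point). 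Everything else is the elementary integrability threshold $\int |x|^{-p} < \infty \iff p < n$.
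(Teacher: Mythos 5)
Your proposal is correct and follows essentially the same route as the paper: a compactness sandwich $\Lambda^{-1}h_\circ \le h \le \Lambda h_\circ$ for the uniformly Euclidean bound, the observation that the Cartesian components $g_{ij}$ are smooth degree-zero homogeneous functions (so $\partial g_{ij} = \bigO(|x|^{-1})$, $\partial^2 g_{ij} = \bigO(|x|^{-2})$), and the integrability threshold $\int_{\Disk^n}|x|^{-p}\,\D{}x < \infty$ iff $p<n$. Your explicit cutoff argument identifying the pointwise with the distributional derivatives is a point the paper leaves implicit, and is a welcome addition rather than a divergence.
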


\begin{proof}
Since $\Sphere^{n-1}$ is compact, the metric $h$ is \enquote{uniformly round}, that is, there exists $\Lambda_\circ>1$ such that $\Lambda_\circ^{-1} h_\circ\le h \le \Lambda_\circ h_\circ$ for $h_\circ$ the round metric on $\Sphere^{n-1}$.
Then \(g_0 = t^2 h_{\circ} + \D{t}^2\) corresponds to the standard Euclidean metric on \(\Disk^n\) and we have
 \begin{align*}
    \Lambda_{\circ}^{-1} g_{0} \leq \Lambda_{\circ}^{-1} t^2 h_{\circ} + \D{t}^2 \leq t^2 h + \D{t}^2 = g \leq \Lambda_{\circ} t^2 h_{\circ} + \D{t}^2 \leq \Lambda_{\circ} g_{0},
\end{align*}
so \(g\) is uniformly Euclidean on \(\Disk^n\).
    
To verify the Sobolev regularity, let \(\Phi\colon \Disk^n\setminus\{0\}\to \Sphere^{n-1}\times (0,1]\) denote the diffeomorphism sending \(x = (x^1, \dotsc, x^n)\) to \((x/\lvert x \rvert,\lvert x \rvert)\).
Then
\[
    \frac{\partial\Phi}{\partial x^i}
    =\frac{1}{\lvert x \rvert}\left(\frac{\partial}{\partial x^i}-x^i\frac{\mathbf{x}}{\lvert x \rvert^2},x^i\right),
\]
where \(\mathbf{x} = \sum_{k=1}^n x^k \coordvf{x^k}\) is the identity viewed as vector field.
Thus,
\[
    g_{ij}(x)%
    =g_x\left(\frac{\partial\Phi}{\partial x^i},\frac{\partial\Phi}{\partial x^j}\right)
    =h_{x / \lvert x \rvert}\left(\frac{\partial}{\partial x^i}-x^i\frac{\mathbf{x}}{\lvert x \rvert^2},\frac{\partial}{\partial x^j}-x^j\frac{\mathbf{x}}{\lvert x \rvert^2}\right)+\frac{x^ix^j}{\lvert x \rvert^2}
\]
where we used the identification $t=\lvert x \rvert$ under the diffeomorphism $\Phi$.
We deduce that we can write \(g_{ij}(x)=f_{ij}(x/\lvert x \rvert)\) for some smooth functions \(f_{ij} \colon \R^n \to \R\).
Hence, \(g_{ij} \in \bigO(1)\), \(\partial_kg_{ij}\in\bigO(t^{-1})\) and \(\partial_l\partial_kg_{ij}\in\bigO(t^{-2})\) as \(t \to 0\).
This shows that each \(g_{ij}\) is in \(\SobolevW^{2,p/2}(\Disk^n) \cap \Lp^\infty(\Disk^n)\), for every \(p<n\).
\end{proof}

To prove \cref{thm:main_counterexamples}, we need closed simply connected  manifolds which do not support positive scalar curvature.
In dimensions \(\geq 5\) such examples must be spin and they exist whenever the receptacle for the Hitchin \(\alpha\)-invariant~\cite{Hitchin} does not vanish.
The latter is the group \(\KO_n\) which is \(\Z\) if \(n \equiv 0,4 \mod 8\), \(\Z/2\Z\) if \(n \equiv 1,2 \mod 8\) and vanishes otherwise.

\begin{remark}\label{remark:existence_of_non-trivial_Hitchin}
    If \(4 \leq n \equiv 0, 1, 2,\) or \(4 \mod 8\), then there exists a simply connected closed spin \(n\)-manifold \(B\) which does not admit a smooth positive scalar curvature metric because \(\alpha(B;1) \neq 0 \in \KO_n\).
    Indeed, the Atiyah--Bott--Shapiro orientation yields a surjective homomorphism from the spin cobordism group \(\Omega^{\mathrm{spin}}_n \to \KO_{n}\) for \(n \geq 0\) and for \(n \geq 4\) we can use surgery to find simply connected representatives.
    More concretely, for  \(n \equiv 0\) or \(4 \mod 8\) we can just take \(B\) to be a product of \(\mathrm{K}3\) surfaces to obtain a simply connected spin manifold with non-vanishing \(\Ahat\)-genus.
    
    In the cases \(8 \leq n \equiv 1, 2 \mod 8\) one can find exotic spheres with non-vanishing Hitchin genus in \(\Z/2\Z\).
    This follows from classical computations of \textcite{Adams,Milnor:RemarksSpinManifolds}, see also the discussion in \cite[\S4]{Joachim-exotic-spheres}.
\end{remark}

\begin{proof}[Proof of \cref{thm:main_counterexamples}]
    In the cases \(8 \leq n \equiv 0, 1, 2,\) or \(4 \mod 8\) there exists a simply connected spin \(n\)-manifold \(B\) which does not admit a smooth positive scalar curvature as explained in \cref{remark:existence_of_non-trivial_Hitchin} above.
    By \cref{prop:linfty-on-simply-connected-manifolds} however, there exists an $\Lp^\infty$-metric of regularity \(\SobolevW^{2,p/2}\) on $B$ which is smooth away from a point and has uniformly positive scalar curvature.
    This metric is already the desired counterexample.

    The cases of other dimension parities follow from the fact that the product of a smooth metric on another manifold $N$ and an $\Lp^\infty$-metric of regularity \(\SobolevW^{k,p}\) on $B$ which is smooth on $B\setminus \{\ast\}$ yields an $\Lp^\infty$-metric on $B\times N$ which is of regularity \(\SobolevW^{k,p}\) and smooth on  $B \times N \setminus (\{\ast\} \times N)$.
    Therefore, on the one hand, $M\coloneqq B \times \Torus^k$ admits an $\Lp^\infty$-metric which is smooth with uniformly positive scalar curvature outside $S=\{\ast\} \times \Torus^k \subset M$.
    Moreover, such metric has regularity \(\SobolevW^{2,p/2}\) for every \(p<n\).
    Note that the codimension of \(S\) in \(M\) is equal to \(\dim(B) = n \geq 8\).
    On the other hand, $\alpha(B \times \Torus^k; \Z^k) \neq 0$ (see for instance~\cite[Appendix B]{Zeidler:BandWidth}) and so \(M\) does not support a smooth metric of positive scalar curvature.
\end{proof}

\begin{remark}\label{rem:Novikov}
    The \(\Ahat\)-genus is an oriented homeomorphism invariant by a classical result of \textcite[Theorem~1]{Novikov}.
    This means that the examples with point singularities in dimensions \(8 \leq n \equiv 0,4 \mod 8\) constructed in the proof above remain counterexamples even after changing the smooth structure.
    This property persists after taking products with the torus and thus we obtain such counterexamples to Schoen's conjecture in all dimensions \(\geq 8\).
\end{remark}

If we now turn to the case where the smooth structure matters, then we obtain the following more hands-on examples:

\begin{theorem}\label{thm:exotic-spheres-have-l-infty-psc}
    Every exotic sphere $\Sigma$ of dimension \(n \geq 7\) admits an $\Lp^\infty$-metric \(g\) which is smooth away from a point $p\in\Sigma$ together with a homeomorphism \(F \colon \Sigma \to \Sphere^{n}\) which restricts to a diffeomorphism \(\Sigma \setminus \{p\} \xrightarrow{\cong} \Sphere^{n} \setminus \{F(p)\}\) such that \(F^\ast g_{\circ} = g\) on \(\Sigma \setminus \{p\}\), where \(g_{\circ}\) is the round metric.
    In particular, \(g\) has scalar curvature equal to $n(n-1)$ on $\Sigma\setminus\{p\}$.
\end{theorem}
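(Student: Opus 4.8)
The plan is to realise $\Sigma$ as a \emph{twisted sphere} and to build $F$ from the identity on one hemisphere and the open cone of a diffeomorphism of $\Sphere^{n-1}$ on the other. For $n \geq 7$ (in particular $n \geq 6$) the h-cobordism theorem shows that $\Sigma$ is a twisted sphere: there is a diffeomorphism $\phi \colon \Sphere^{n-1} \to \Sphere^{n-1}$ with $\Sigma \cong \Disk^n_1 \cup_\phi \Disk^n_2$, the two closed disks being glued along their boundaries via $\phi$. I would write the round sphere as $\Sphere^n = \Disk^n_A \cup_{\mathrm{id}} \Disk^n_B$, two closed caps glued along the equator, and parametrise each cap by geodesic normal coordinates centred at its pole, so that on $\Disk^n_A$, taken of some small radius $r_0$, the round metric reads $g_{\circ} = \D{r}^2 + \sin^2(r)\, g_{\Sphere^{n-1}}$; normalise $\Disk^n_1$ to also have radius $r_0$.

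Next I would define $F \colon \Sigma \to \Sphere^n$ to be the identity on $\Disk^n_2 = \Disk^n_B$ and, on $\Disk^n_1$, the \emph{cone of $\phi$}: with $p$ the centre of $\Disk^n_1$, let $c_\phi$ send the point at distance $t$ from $p$ in the direction $z \in \Sphere^{n-1}$ to the point of $\Disk^n_A$ at geodesic distance $t$ from its pole in the direction $\phi(z)$, and set $F(p) \coloneqq$ the pole of $\Disk^n_A$. If the collars used in the two gluings are chosen compatibly, $F$ is smooth across the equatorial sphere — in suitable collar coordinates it is there just $\phi \times \mathrm{id}$ — while away from $p$ it is on each piece a composition of smooth maps, as is its inverse $c_{\phi^{-1}} \cup \mathrm{id}$; so $F$ restricts to a diffeomorphism $\Sigma \setminus \{p\} \xrightarrow{\cong} \Sphere^n \setminus \{F(p)\}$. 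Since $c_\phi$ preserves the radial coordinate, $F$ is a continuous bijection of compact Hausdorff spaces, hence a homeomorphism. Putting $g \coloneqq F^\ast g_{\circ}$, the identity $F^\ast g_{\circ} = g$ holds by definition on $\Sigma \setminus \{p\}$, where $g$ is a smooth metric; and because $F$ is there an isometry onto an open subset of the round sphere, $\scal_g \equiv n(n-1)$ on $\Sigma \setminus \{p\}$.

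It then remains to check that $g$ extends across $p$ to an $\Lp^\infty$-metric of the claimed regularity. From the construction of $c_\phi$ and the form of the round metric on $\Disk^n_A$, near $p$ one has $g = \D{t}^2 + \sin^2(t)\, h$, where $t$ is the distance to $p$ and $h \coloneqq \phi^\ast g_{\Sphere^{n-1}}$ is a smooth metric on $\Sphere^{n-1}$. This is the conical metric $\D{t}^2 + t^2 h$ of \cref{lem:Sobolev_regular} with the spherical part multiplied by the smooth, positive function $t \mapsto (\sin t / t)^2$; since that factor is bounded between two positive constants and smooth (as a function of $t^2$, hence of the Cartesian coordinates) near $p$, the argument of \cref{lem:Sobolev_regular} goes through with this extra factor and shows that $g$ is uniformly Euclidean with coefficients in $\SobolevW^{2,p/2} \cap \Lp^\infty$ for every $p < n$. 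This, together with the smoothness and the scalar curvature identity away from $p$, yields the triple $(g, F, p)$ with all the asserted properties.

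The step I expect to be the main obstacle is the cone construction together with its regularity: passing to $c_\phi$ is exactly what trades smoothness of $F$ at $p$ for the nontrivial twist $\phi$ — and $c_\phi$ \emph{must} fail to be smooth at $p$, as otherwise $\phi$ would extend to a diffeomorphism of $\Disk^n$ and $\Sigma$ would be standard — so one has to be careful that $F$ is nonetheless genuinely smooth everywhere else, in particular across the equatorial gluing, which is a routine but fiddly matter of choosing compatible collars. A secondary point is simply to invoke correctly the classical fact that, for $n \geq 5$, every exotic $n$-sphere is a twisted sphere.
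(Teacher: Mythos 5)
Your proposal is correct and follows essentially the same route as the paper: realize $\Sigma$ as a twisted sphere $\Disk^n\cup_\phi\Disk^n$ via the $h$-cobordism theorem, take $F$ to be the Alexander homeomorphism (identity on one disk, cone of $\phi$ on the other), and verify that $F^\ast g_\circ=\sin(t)^2\,\phi^\ast h_\circ+\D{t}^2$ near $p$ is uniformly Euclidean by the compactness ("uniform roundness") argument of \cref{lem:Sobolev_regular}. The paper only checks the $\Lp^\infty$ bound here rather than the Sobolev regularity you additionally note, but your extra observation is sound.
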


    Since there are exotic spheres with non-vanishing $\alpha$-invariant (see \cref{remark:existence_of_non-trivial_Hitchin}), this again yields  counterexamples to Schoen's conjecture.
    It also demonstrates that Llarull's theorem with point singularities by \textcite{chu2024llarullstheorempuncturedsphere} is sharp, that is, in the situation of \cite[Theorem~1.4]{chu2024llarullstheorempuncturedsphere} one cannot conclude that the metric is smooth on all of \(M\) and \(f\) is a smooth isometry.

\begin{proof}[Proof of \cref{thm:exotic-spheres-have-l-infty-psc}]
    By the proof of the $h$-cobordism theorem \cite{smale_on-the-structure-of-manifolds}, for every exotic sphere $\Sigma$ exists a diffeomorphism $f\colon \Sphere^{n-1}\to \Sphere^{n-1}$ such that $\Sigma = \Disk^n\cup_f \Disk^n$, that is $\Sigma$ is given by gluing two disks along the diffeomorphism $f$.
    After identifying $\Sphere^n = \Disk^n\cup_{\id} \Disk^n$, the Alexander trick yields a homeomorphism $F\colon\Sigma\to \Sphere^n$ given by 
    \[
        F(x)\coloneqq \begin{cases}
            x &\text{ for } x\in \text{left-hand }\Disk^n,\\
            0 &\text{ for } 0=x\in \text{right-hand }\Disk^n,\\
            \norm{x}\cdot f\left(\frac{x}{\norm{x}}\right) &\text{ for } 0\not=x\in \text{right-hand }\Disk^n\setminus\{0\},
        \end{cases}
    \]
    where in the first case we map to the left-hand $\Disk^n$ in $\Sphere^n$ and in the latter two cases we map to the right-hand $\Disk^n$ in $\Sphere^n$.
    Note that $F$ is smooth away from $p=0\in \text{right-hand } \Disk^n\subset \Sigma$.
    Let $g_\circ$ be the round metric on $\Sphere^n$ and let $g\coloneqq F^*g_\circ$.
    The metric $g$ is smooth on $\Sigma\setminus\{p\}$ and has an honest singularity at $p$.
    
    It remains to show that this metric is uniformly Euclidean.
    First, we consider polar coordinates given by the identification $\Phi\colon \Disk^n\setminus\{0\}\cong \Sphere^{n-1}\times(0,1]$, $x\mapsto(x/|x|,|x|)$ as before.
    Its inverse is given by $(q,t)\mapsto t \cdot q$.
    In these coordinates on the right-hand \(\Disk^n\) the homeomorphism $F$ is given by $F_\Phi(q,t)=(f(q),t)$ and the metric $g_\circ$ is of the form $\sin(t)^2h_\circ+\D{t}^2$ for $h_\circ$ the round metric on $\Sphere^{n-1}$.
    Furthermore, we have
    \[
        F_\Phi^*(\sin(t)^2h_\circ+\D{t}^2)=\sin(t)^2 f^\ast h_\circ + \D{t}^2.
    \]
    As in the proof of \cref{lem:Sobolev_regular}, compactness of $\Sphere^{n-1}$ yields a constant $\Lambda\ge1$ such that $\Lambda^{-1}h_\circ \le f^\ast h_\circ \le \Lambda h_\circ$. We obtain
    \[\Lambda^{-1}(\sin(t)^2h_\circ+\D{t}^2)\le \sin(t)^2 f^\ast h_\circ +\D t^2\le \Lambda(\sin(t)^2h_\circ+\D{t}^2)\]
    and hence the metric $g$ is uniformly Euclidean.
\end{proof}

A similar idea also shows that there are exotic tori which admit a flat \(\Lp^\infty\)-metric with a  point singularity even though exotic tori do not admit smooth flat metrics.

\begin{theorem}\label{thm:exotic-tori-have-l-infty-flat}
    Let \(\Sigma\) be an exotic sphere of dimension \(n \geq 7\) and consider the smooth manifold \(T_\Sigma = \Torus^{n} \# \Sigma\).
    Then \(T_\Sigma\) admits an \(\Lp^\infty\)-metric \(g\) which is smooth and flat away from a point \(p \in T_\Sigma\).
    Moreover, if \(\Sigma\) is a non-trivial exotic sphere, then \(T_\Sigma\) does not admit a smooth metric of non-negative scalar curvature.
\end{theorem}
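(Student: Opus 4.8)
The plan is to mimic the construction behind \cref{thm:exotic-spheres-have-l-infty-psc}, replacing the round metric on the sphere by the flat metric on the torus. The key structural input is again the $h$-cobordism theorem: since $\dim \Sigma = n \geq 6$, there is a diffeomorphism $f\colon \Sphere^{n-1}\to\Sphere^{n-1}$ with $\Sigma = \Disk^n\cup_f \Disk^n$. To form $T_\Sigma = \Torus^n\#\Sigma$, fix a flat torus $\Torus^n$ and a small embedded standard ball $\Disk^n_{\mathrm{left}}\subset \Torus^n$; then $T_\Sigma$ is obtained from $\Torus^n$ by removing the interior of a slightly smaller ball and re-gluing a disk $\Disk^n_{\mathrm{right}}$ along $f$, so that $T_\Sigma = (\Torus^n\setminus \mathring{\Disk}^n)\cup_f \Disk^n_{\mathrm{right}}$. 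As a set/homeomorphism type this is just $\Torus^n$: concretely, I would build a homeomorphism $F\colon T_\Sigma\to\Torus^n$ which is the identity on $\Torus^n\setminus\mathring{\Disk}^n$ and which on the glued-in right-hand disk is the radial Alexander extension $F(x) = \lvert x\rvert \cdot f(x/\lvert x\rvert)$ (and $F(0)=0$), exactly as in the proof of \cref{thm:exotic-spheres-have-l-infty-psc}. This $F$ is a homeomorphism, smooth away from the single point $p=0$ in the right-hand disk, and a diffeomorphism on $T_\Sigma\setminus\{p\}$.

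Next I would set $g \coloneqq F^\ast g_{\mathrm{flat}}$, where $g_{\mathrm{flat}}$ is the flat metric on $\Torus^n$ (which near the deleted ball is just the Euclidean metric $g_0$). Away from $p$ this is the pullback of a flat metric by a diffeomorphism, hence smooth and flat; in particular $\scal_g = 0$ on $T_\Sigma\setminus\{p\}$, and $g$ does not extend smoothly across $p$ precisely when $f$ is not smoothly isotopic to the identity, i.e.\ when $\Sigma$ is exotic. The only substantive point is that $g$ is uniformly Euclidean. On $\Torus^n\setminus\mathring{\Disk}^n$ this is immediate since $F$ is the identity and $g=g_{\mathrm{flat}}$. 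On the right-hand disk I would use polar coordinates $\Phi\colon \Disk^n\setminus\{0\}\xrightarrow{\ \cong\ }\Sphere^{n-1}\times(0,1]$, $x\mapsto(x/\lvert x\rvert,\lvert x\rvert)$, in which the Euclidean metric is $t^2 h_\circ + \D{t}^2$ and $F$ becomes $(q,t)\mapsto(f(q),t)$, so that
\[
  F^\ast(t^2 h_\circ + \D{t}^2) = t^2\, f^\ast h_\circ + \D{t}^2.
\]
Compactness of $\Sphere^{n-1}$ gives $\Lambda\geq 1$ with $\Lambda^{-1}h_\circ \leq f^\ast h_\circ\leq \Lambda h_\circ$, whence $\Lambda^{-1}g_0 \leq g \leq \Lambda g_0$ near $p$, exactly as in \cref{lem:Sobolev_regular} and the proof of \cref{thm:exotic-spheres-have-l-infty-psc}. (The regularity statement of \cref{lem:Sobolev_regular} even shows $g\in \SobolevW^{2,p/2}\cap\Lp^\infty$ locally, though that is not needed here.) Patching the two descriptions over the overlap annulus, where $F$ interpolates between the identity and the radial map and all data are smooth on a compact region, shows $g$ is uniformly Euclidean on all of $T_\Sigma$.

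Finally, for the non-existence of a smooth psc (indeed non-negative scalar curvature) metric on $T_\Sigma$ when $\Sigma$ is a non-trivial exotic sphere: $T_\Sigma = \Torus^n\#\Sigma$ is spin with fundamental group $\Z^n$, and its Rosenberg index equals that of $\Torus^n$, which is non-zero (the fundamental class of $\Torus^n$ is detected in $\KO_n(\Cstar\Z^n)$), so $T_\Sigma$ carries no smooth psc metric; ruling out non-negative scalar curvature as well follows since $\Torus^n$ is Yamabe non-positive and a scalar-flat metric on an enlargeable (or more precisely, Rosenberg-obstructed) manifold would have to be Ricci-flat hence flat, contradicting that $T_\Sigma$ is not homeomorphic\,---\,indeed not diffeomorphic\,---\,to a flat manifold (a Bieberbach manifold is finitely covered by a standard torus, and $T_\Sigma$ is not, since $\Sigma$ is exotic). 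The main obstacle I anticipate is not any single step but the bookkeeping of the connected-sum construction: one must set up $F$ so that it genuinely patches the radial Alexander map on the glued-in disk to the identity on the rest of $\Torus^n$ across a smooth collar, and check that the resulting metric is globally $\Lp^\infty$ there; this is routine but needs care, and it is essentially the content already carried out in the sphere case of \cref{thm:exotic-spheres-have-l-infty-psc}.
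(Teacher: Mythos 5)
The first half of your argument (the Alexander homeomorphism $F\colon T_\Sigma\to\Torus^n$, the pullback metric $g=F^\ast g_{\mathrm{flat}}$, and the uniform-Euclidean estimate in polar coordinates) is exactly the paper's construction and is correct.

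The second half, however, has a genuine gap. You need that $T_\Sigma$ admits no flat metric, and your justification is that \enquote{a Bieberbach manifold is finitely covered by a standard torus, and $T_\Sigma$ is not, since $\Sigma$ is exotic.} This claim is false. The degree-$k$ cover of $T_\Sigma=\Torus^n\#\Sigma$ obtained by unwrapping one circle factor $k$ times is $\Torus^n\#\Sigma\#\dotsb\#\Sigma$ ($k$ summands of $\Sigma$, one over each preimage of the connect-sum point). Since the Kervaire--Milnor group of homotopy $n$-spheres is finite, $\Sigma$ has some finite order $m$ under connected sum, and the corresponding $m$-fold cover of $T_\Sigma$ is $\Torus^n\#(m\Sigma)\cong\Torus^n\#\Sphere^n\cong\Torus^n$. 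So \emph{every} $T_\Sigma$ is finitely covered by the standard smooth torus, and this criterion cannot distinguish it from a Bieberbach manifold. (Your parenthetical \enquote{not homeomorphic \dots\ to a flat manifold} is also wrong as stated: $T_\Sigma$ is homeomorphic to $\Torus^n$.)

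The correct route, which the paper takes, is Bieberbach's \emph{second} theorem (rigidity rather than the covering statement): two compact flat manifolds with isomorphic fundamental groups are diffeomorphic, so a flat metric on $T_\Sigma$ would force $T_\Sigma$ to be diffeomorphic to the standard $\Torus^n$ itself. This in turn requires knowing that $\Torus^n\#\Sigma$ is \emph{not} diffeomorphic to $\Torus^n$ for $\Sigma$ non-trivial --- a fact you implicitly assume \enquote{since $\Sigma$ is exotic,} but which is not automatic: a priori $\Sigma$ could lie in the inertia group of $\Torus^n$, i.e.\ connected sum with $\Sigma$ could fail to change the smooth structure. Ruling this out is the content of \cref{thm:exotic-tori} in the appendix, proved via the surgery-theoretic injectivity of $\structureset(\Sphere^n)\to\structureset(\Torus^n)$ (\cref{prop:connected_sum_injective_on_structure_set}). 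Once you replace your covering-space argument by Bieberbach rigidity plus \cref{thm:exotic-tori}, the rest of your obstruction argument (enlargeability forces any non-negative scalar curvature metric to be flat, by \cite[Theorem~5.8]{GL1983}) goes through as in the paper.
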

\begin{proof}
    As in (the proof of) \cref{thm:exotic-spheres-have-l-infty-psc}, we write $\Sigma = \Disk^n\cup_f \Disk^n$ for some diffeomorphism $f\colon \Sphere^{n-1}\to \Sphere^{n-1}$.
    Taking a small open ball \(\Ball^n \subset \Torus^{n}\), we can identify \(T_\Sigma = (\Torus^{n} \setminus \Ball^n) \cup_{f} \Disk^{n}\) and \(\Torus^n = (\Torus^n \setminus \Ball^n) \cup_{\id} \Disk^n\) and construct again an Alexander homeomorphism \(F \colon T_\Sigma \to \Torus^n\) given by
    \[
        F(x)\coloneqq \begin{cases}
            x &\text{ for } x \in \Torus^n \setminus \Ball^n\\
            0 &\text{ for } 0=x\in \Disk^n\\
            \norm{x}\cdot f\left(\frac{x}{\norm{x}}\right) &\text{ for } 0\not=x\in \Disk^n\setminus\{0\}.
        \end{cases}
    \]
    We let \(g_{\mathrm{flat}}\) be any flat metric on \(\Torus^n\) and let \(g \coloneqq F^\ast g_{\mathrm{flat}}\).
    As above, this yields the desired \(\Lp^\infty\)-metric on \(T_\Sigma\) which is smooth and flat outside the point \(p \in T_\Sigma\) corresponding to \(0 \in \Disk^n\).
    
    Moreover, if \(\Sigma\) is a non-trivial exotic sphere, then \(T_\Sigma\) is not diffeomorphic to the standard torus by \cref{thm:exotic-tori}.
    However, \(T_\Sigma\) is still an enlargeable spin manifold and thus any smooth non-negative scalar curvature metric on \(T_\Sigma\) must be flat by \cite[Theorem~5.8]{GL1983}.
    But any two flat manifolds with isomorphic fundamental groups are diffeomorphic by Bieberbach's second theorem (see e.g.~\cite[Theorem 5.4]{Charlap:Bieberbach}).
    In particular, \(T_\Sigma\) cannot admit a flat metric and thus also no metric of non-negative scalar curvature.
\end{proof}

\section{Non-smoothability of \texorpdfstring{\(\Lp^\infty\)}{L∞}-metrics on \texorpdfstring{\(\R^n\)}{ℝⁿ} with positive scalar curvature outside a point}\label{sec:non-smoothability}

Here we prove \cref{thm:paulasquestion} by a similar construction as in the previous section.
\begin{proof}[Proof of \cref{thm:paulasquestion}]
    Let $n$ be as specified in the statement of the theorem.
    Then by \cref{remark:existence_of_non-trivial_Hitchin,lem:collared_metric} there exists a closed \(n\)-manifold \(M\) which does not admit psc, but there exists a psc metric on $g_M$ on $M\setminus \Ball^n$ that is collared near the boundary. 
    In particular, we can arrange that $g_M=h+\D t^2$ near $\partial(M\setminus \Ball^n)=\Sphere^{n-1}$ for some metric $h$ on $\Sphere^{n-1}$ with $\scal_h\ge1$. We define an $\Lp^\infty$-metric on $\R^n$ by
    \[g\coloneqq \left(\frac{t}{\lambda}\right)^2h + \D t^2\]
    where $\lambda\coloneqq \sqrt{2 (n-1)(n-2)}$. 
    As in the proof of \cref{lem:Sobolev_regular}, since any metric on \(\Sphere^{n-1}\) is \enquote{uniformly round} and the Euclidean metric is given by \(g_\circ = t^2 h_{\circ} + \D{t}^2\), this means that there exists a constant \(\Lambda > 1\)  such that \(\Lambda^{-1} g_\circ \leq g \leq \Lambda g_\circ\).
    The metric \(g\) is smooth on \(\R^n \setminus \{0\}\) and the desired \(\SobolevW^{2,p/2}_\loc\)-regularity near \(0\) also follows from \cref{lem:Sobolev_regular}. 
    Its scalar curvature on $\R^n\setminus\{0\}$ is given by 
    \[\scal_g = \left(\frac{\lambda}{t}\right)^2\left(\scal_h - \frac{(n-1)(n-2)}{\lambda^2}\right)\ge\frac12\lambda^2t^{-2}>0.\]
    Furthermore, the induced metric on $\Sphere^{n-1}_{\lambda} = \{t=\lambda\}\subset\R^n$ is precisely given by $h$.
    The mean curvature\footnote{Using the convention that the mean curvature of the unit sphere with respect to the Euclidean metric is equal to \(n-1 > 0\).} $\mean_\lambda$ of $\Sphere^{n-1}_{\lambda}$ satisfies
    \[\mean_\lambda = (n-1) \frac{1}\lambda >0.\]
    
    Let us now proceed by contradiction and assume the existence of the family of smooth metrics $g_t$ with non-negative scalar curvature and converging to \(g\) in $\Ct^\infty_{\loc}(M\setminus\{0\})$.
    Then there exists an $\varepsilon>0$ such that $\scal_{g_\varepsilon}(p)\ge\frac14$ for all $\lambda/2 \leq |p| \leq \lambda$, the metric $h_\varepsilon$ induced by $g_\varepsilon$ on $\Sphere^{n}_\lambda$ satisfies $\scal_{h_\varepsilon}\ge\frac12>0$ and the mean curvature of $\Sphere^n_\lambda$ is $\mean_{\lambda}\ge(n-1)/(2\lambda)$.
    Consider now the restriction of $g_\varepsilon$ to $\Disk^n_\lambda$. 
    It has non-negative scalar curvature overall, positive scalar curvature near the mean convex boundary \(\partial \Disk^n_\lambda\) and the induced metric on the boundary equals $h_\varepsilon$ and has positive scalar curvature.
    Thus, we can assume that $g_\varepsilon$ is collared while retaining non-negative scalar curvature and without changing the restriction to the boundary \(\Sphere_\lambda^{n-1}\) by \cite[Corollary~4.9]{baerhanke_boundary-conditions-for-scalar-curvature}\footnote{Even though it is not stated in loc.cit.\ the deformation principle they prove leaves the metric on the boundary fixed (\cite[Theorem 3.7 (b)]{baerhanke_boundary-conditions-for-scalar-curvature}). In fact the proof of \cite[Theorem 4.8]{baerhanke_boundary-conditions-for-scalar-curvature} also works for a fixed psc metric $h$ on the boundary and then \cite[Theorem 4.5]{baerhanke_boundary-conditions-for-scalar-curvature} with $\mathscr{X}=\{h\}$.}.

    Furthermore, if $\varepsilon$ is small enough, $h_\varepsilon$ and $h$ are isotopic through psc metrics, since the space of psc metrics is an open subspace of the space of all metrics in the (weak) $\Ct^\infty$-topology. 
    Hence, there is a concordance from $h$ to $h_\varepsilon$ by \cite[Lemma 3]{gromov-lawson_classification-of-simply-connected} (see also \cite[Lemma 2.5]{ebertfrenck}), that is, there is a psc metric $G$ on the cylinder $\Sphere^{n-1}\times[0,1]$ which is collared near both boundaries and restricting to $h_\varepsilon+\D t^2$ and $h +\D t^2$ on the respective boundary components. 
    
    Gluing $G$ onto $g_\varepsilon|_{\Disk^n_\lambda}$, we obtain a metric on $\Disk^n$ of non-negative scalar curvature with collared boundary that restricts to $h$ near the boundary.
    Then we can glue this metric onto $g_M$ to obtain a metric on the closed manifold $M$ which has non-negative and non-vanishing scalar curvature.
    But this would mean that \(M\) also has a metric of positive scalar curvature (for instance, run the Ricci flow for a short time), a contradiction to the choice of \(M\).
\end{proof}

\cref{thm:paulasquestion} affirmatively answers \cite[Question 3]{burkhardtguim2024smoothinglinftyriemannianmetrics}. In \emph{loc.\ cit.\ }it is shown that an $\Lp^\infty$-metric $g$ admits a \enquote{smoothing family} $(g_t)_{t>0}$ such as in the statement of \cref{thm:paulasquestion}, provided the metric $g$ is $\Lp^\infty$-close to the Euclidean one.
\textcite[Question 3]{burkhardtguim2024smoothinglinftyriemannianmetrics} then asks if the assumption of $\Lp^\infty$-closeness is necessary.
Indeed, our \cref{thm:paulasquestion} shows that this condition cannot be replaced by merely requiring the metric to be uniformly bilipschitz to the Euclidean metric.

\section{Relative Dirac obstructions} \label{sec:obstructions}
The proof of \cref{thm:Ricci-flatness} is based on the following proposition.

\begin{proposition}\label{prop:KO-obstructions}
    Let $M$ be a closed $n$-dimensional spin manifold with fundamental group $\Gamma$.
    Let $ S$ be a finite subset of $M$, and let $g$ be a uniformly Euclidean metric on $M$ which is smooth on \(M \setminus S\).
    If $\scal_g>0$ on $M\setminus S$, then $j(\alpha(M;1))=\alpha(M;\Gamma)$.
\end{proposition}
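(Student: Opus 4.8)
The plan is to reduce the statement to a vanishing theorem for the index of the Dirac operator on a \emph{complete} manifold of uniformly positive scalar curvature with ``negligible'' ends, following the strategy of \textcite{kazaras2019desingularizing,CM:Skeleton}, and to identify the resulting obstruction with the KO-theoretic quantity $\alpha(M;\Gamma) - j(\alpha(M;1))$ appearing in \textcite{cecchini_long_neck} (see also \textcite[\S 3.2]{CZ-generalized-Callias}).

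\textbf{Step 1 (desingularization).} Near each $p \in S$ the metric $g$ is smooth and of positive scalar curvature on a punctured ball and, being $\Lp^\infty$, uniformly bi-Lipschitz to the Euclidean metric there. Since $S$ is $0$-dimensional, hence polar, I would change $g$ conformally in a neighborhood of each $p$ by a positive superharmonic weight comparable to $\operatorname{dist}(\cdot,p)^{-(n-2)/2}$ in order to produce a \emph{complete} metric $\bar g$ on $X := M \setminus S$ which agrees with $g$ off a neighborhood of $S$, still has positive scalar curvature, and whose $\lvert S \rvert$ ends are each uniformly bi-Lipschitz to a half-cylinder over a positive-scalar-curvature metric $h_p$ on $\Sphere^{n-1}$. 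By compactness of $M$, the scalar curvature of $\bar g$ is then bounded below by a positive constant on all of $X$. Topologically, $X$ is $M$ with $\lvert S \rvert$ open disks removed; its compact core $W$ is a spin manifold whose boundary components are the spheres $(\Sphere^{n-1},h_p)$, and the reference map $X \to M \to B\Gamma$ restricts to a null-homotopic map on $\partial W$. Making all of this precise with only $\Lp^\infty$-control near $S$ is the technical heart, and here I would follow \textcite{kazaras2019desingularizing,CM:Skeleton} closely.

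\textbf{Step 2 (index computation).} Let $\mathcal{L}_\Gamma$ denote the Mishchenko bundle of flat $\Cstar\Gamma$-modules pulled back to $X$ from $M$. Since each cross-section $(\Sphere^{n-1},h_p)$ carries positive scalar curvature and $\mathcal{L}_\Gamma$ is trivial over each end, the cross-sectional Dirac operators are invertible, so the $\Cl_n$-linear Dirac operator $D_X \otimes \mathcal{L}_\Gamma$ has a well-defined index in $\KO_n(\Cstar\Gamma)$, equal to the Atiyah--Patodi--Singer index of $D^{\mathrm{APS}}_W \otimes \mathcal{L}_\Gamma|_W$ (no $\eta$-term, by the spectral gap at the ends). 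By the Lichnerowicz--Rosenberg argument, valid with $\Cstar$-algebra coefficients on complete manifolds of uniformly positive scalar curvature, this index vanishes. Capping off the boundary spheres of $W$ with disks $\Disk^n_p$ (equipped with any smooth metrics matching the collars) recovers $M$, so the splitting theorem for APS indices gives
\[
  \alpha(M;\Gamma) = \mathrm{ind}\bigl(D^{\mathrm{APS}}_W \otimes \mathcal{L}_\Gamma|_W\bigr) + \sum_{p \in S} \mathrm{ind}\bigl(D^{\mathrm{APS}}_{\Disk^n_p} \otimes \mathcal{L}_\Gamma|_{\Disk^n_p}\bigr) = \sum_{p \in S} j(\beta_p),
\]
where the first summand vanishes by the above and, each disk being contractible, $\mathcal{L}_\Gamma|_{\Disk^n_p}$ is the trivial $\Cstar\Gamma$-bundle, so the $p$-th term equals $j(\beta_p)$ with $\beta_p := \mathrm{ind}(D^{\mathrm{APS}}_{\Disk^n_p}) \in \KO_n$ the untwisted $\Cl_n$-linear APS index. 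Running the same splitting with real coefficients, and using once more that $\mathrm{ind}(D^{\mathrm{APS}}_W) = 0$ (Lichnerowicz on the complete uniformly-psc manifold $X$, now with $\R$-coefficients), yields $\alpha(M;1) = \sum_{p \in S} \beta_p$. Combining the two identities gives $\alpha(M;\Gamma) = \sum_p j(\beta_p) = j\bigl(\sum_p \beta_p\bigr) = j(\alpha(M;1))$.

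I expect the genuine difficulty to be concentrated entirely in Step 1: building a complete metric of \emph{uniformly} positive scalar curvature with controlled (cylindrical-over-psc) ends out of an $\Lp^\infty$-metric known only to be smooth and pointwise of positive scalar curvature off the finite set $S$. The remaining ingredients — Fredholmness of Dirac operators with cylindrical ends over a psc cross-section, the splitting theorem for $\Cl_n$-linear and $\Cstar$-valued APS indices, and Lichnerowicz vanishing with coefficients — are by now standard.
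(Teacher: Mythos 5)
Your top-level strategy---conformally blow up the singular points to a complete psc metric on $M\setminus S$ and then feed this into a KO-theoretic relative-index obstruction---is the same as the paper's, but both of your steps demand more than the blow-up can deliver, and your Step 2 leans on exactly that unavailable extra structure. With only $\Lp^\infty$ control at $p\in S$ there is no reason the ends of the blown-up metric are (bi-Lipschitz to) half-cylinders over fixed psc metrics $h_p$ on $\Sphere^{n-1}$: the induced metrics on the distance spheres $\{\dist_g(\cdot,p)=r\}$ need not have positive scalar curvature for any $r$ and need not converge as $r\to0$. Moreover, your claim that ``by compactness of $M$'' the complete metric has uniformly positive scalar curvature is false: the hypothesis is only $\scal_g>0$ on $M\setminus S$, so $\scal_g$ may already decay to $0$ approaching $S$, and any conformal factor $u^{4/(n-2)}$ with $u\to\infty$ multiplies the scalar curvature by $u^{-\frac{n+2}{n-2}}\to0$ along the ends. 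Consequently the ingredients of your APS computation---invertible cross-sectional Dirac operators, a spectral gap at infinity, Fredholmness, and the identification of the $\Lp^2$-index on $X$ with the APS index on the compact core---are all unsupported; the parenthetical ``no $\eta$-term, by the spectral gap'' is also not a valid inference.

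The paper avoids all of this by demanding much less of the desingularization and citing a stronger obstruction theorem. It solves $\mathcal L_\sigma G=\delta_S$ for the modified conformal Laplacian $\mathcal L_\sigma=-4\tfrac{n-1}{n-2}\Delta+\sigma$ with $0<\sigma\le\min(1,\scal_g)$; the classical estimate $G\asymp\dist_g(S,\cdot)^{2-n}$ from \cite[Theorem~7.1]{LSW63} makes $g_\varepsilon=(1+\varepsilon G)^{4/(n-2)}g$ complete, and the choice $\sigma\le\scal_g$ gives $\scal_{g_\varepsilon}\ge(1+\varepsilon G)^{-\frac{n+2}{n-2}}\scal_g>0$---completeness and positivity, nothing more, and in particular no uniformity and no control of the asymptotic geometry. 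The conclusion $j(\alpha(M;1))=\alpha(M;\Gamma)$ is then a direct application of \textcite[Theorem~C]{cecchini_long_neck}, whose Callias-type proof is designed to need exactly these two properties. So your Step 2 should be replaced by this citation (or by its proof), rather than by an APS splitting over cylindrical psc ends that the $\Lp^\infty$ setting does not provide.
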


\begin{proof}
To simplify the notation, set $\hat M=M\setminus S$ and $\hat g= g|_{\hat M}$.
We will first construct a complete psc metric on $\hat M$ following \cite[Proof of Proposition~6.2]{CM:Skeleton} and \cite[Proof of Theorem~A]{kazaras2019desingularizing}.
Consider the modified conformal Laplacian
\[
    \mathcal L_\sigma\coloneqq-4\frac{n-1}{n-2}\Delta+\sigma,
\]
where $\Delta$ is the Laplace-Beltrami operator for $(\hat M,\hat g)$, and $\sigma$ is a positive function on $M\setminus S$ such that $0<\sigma\leq\min(1,\scal_{\hat g})$.
Let $G$ be the positive distributional solution of the elliptic PDE
\[
    \mathcal L_\sigma G=\delta_{ S}\qquad\textrm{on }M,
\]
where $\delta_{ S}$ is the Dirac delta measure on $S$.
By \cite[Theorem~7.1]{LSW63} (see also~\cite[Appendix A]{ChengLeeTam:Singular} for more details), there exists $c>0$ such that
\begin{equation}\label{eq:complete}
    c^{-1}\dist_g( S,x)^{2-n}\leq G(x) \leq c\dist_g( S,x)^{2-n}\qquad\textrm{on }M\setminus S.
\end{equation}
For $\varepsilon>0$, consider the metric \(g_\varepsilon\coloneqq \left(1+\varepsilon \mathop{G}\right)^\frac{4}{n-2}\hat g\) on $\hat M$.
By \eqref{eq:complete}, $g_\varepsilon$ is complete.
By direct computation,
\[
    \scal_{g_\varepsilon}
    =\left(1+\varepsilon \mathop G\right)^{-\frac{n+2}{n-2}}
    \left(\scal_{\hat g}+\varepsilon\left(\scal_{\hat g}-\sigma\right)\mathop G\right)
    \geq \scal_{\hat g}\left(1+\varepsilon \mathop G\right)^{-\frac{n+2}{n-2}}>0.
\]
In conclusion, we constructed a complete metric of positive scalar curvature on $\hat M$.
By a direct application of \cite[Theorem~C]{cecchini_long_neck}, we conclude that $j(\alpha(M;1))=\alpha(M;\Gamma)$.
\end{proof}

\begin{proof}[Proof of \cref{thm:Ricci-flatness}]
    We proceed by contraposition.
    Suppose that $\Ric_g\not\equiv 0$ on $M\setminus S$.
    By \cite[Section~7, Proof of Theorem~1.7]{CM:Skeleton}, there exists a uniformly Euclidean metric $g_1$ on \(M\) which is smooth \(M \setminus S\) and such that $\scal_{g_1}>0$ on $M\setminus S$.
    Then \cref{prop:KO-obstructions} shows that $j(\alpha(M;1))=\alpha(M;\Gamma)$.
\end{proof}

\appendix

\section{Exotic tori}
Given a smooth manifold $M$, let $\manifoldset(M)$ denote the set of smooth manifold structures on $M$, that is, $\manifoldset(M)$ is the set of smooth manifolds homotopy equivalent to $M$ modulo diffeomorphism. 
We call elements of $\manifoldset(M)$ \emph{exotic} $M$'s, which are alternatively also called \enquote{smooth \emph{fake} \(M\)'s}, see \cite[Definition 18.2]{Luck-Macko_Surgery-theory-Foundations}.
We will focus on the examples \(M = \Sphere^n\) and \(M = \Torus^n\), where all exotic \(M\)'s are known to be homeomorphic to \(M\), see~\cite[Sections~2.1, 12.1 and 19.4]{Luck-Macko_Surgery-theory-Foundations} for detailed references.
In this section we exhibit the following theorem which is folklore (compare for instance~\cite{FarrellJones:expanding_endo_tori}), but it is hard to locate an explicit reference in the literature.
\begin{theorem}\label{thm:exotic-tori}
    If $\Sigma\not=\Sphere^n\in \manifoldset(\Sphere^n)$, then $\Torus^n\#\Sigma\not=\Torus^n\in\manifoldset(\Torus^n)$.
\end{theorem}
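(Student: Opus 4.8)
The plan is to distinguish the smooth structures on $\Torus^n$ using the surgery-theoretic structure set together with the splitting principle for tori. First I would recall that the structure set $\manifoldset(\Torus^n)$, in the sense of homotopy smoothings, fits into the surgery exact sequence and is naturally a module over the action of the group $L_{n+1}(\Z[\Z^n])$; moreover, by the computations of Wall, Shaneson, and Hsiang–Shaneson (see also \cite[Sections~12.1 and 19.4]{Luck-Macko_Surgery-theory-Foundations}), there is a well-understood splitting that exhibits $\manifoldset(\Torus^n)$ as built from $\manifoldset(\Sphere^n)$ and lower-dimensional pieces indexed by the subtori of $\Torus^n$. The key point is that the connected sum $\Torus^n \# \Sigma$ corresponds precisely to the summand coming from the top cell $\Sphere^n \subset \Torus^n$, i.e.\ to the image of $[\Sigma] \in \manifoldset(\Sphere^n) = \Theta_n$ under the map induced by collapsing $\Torus^n$ onto its top cell $\Torus^n \to \Torus^n/(\Torus^n \setminus \Ball^n) \simeq \Sphere^n$.

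The key steps, in order, are as follows. (i) Fix the standard torus $\Torus^n$ as a basepoint of $\manifoldset(\Torus^n)$ and describe the connected-sum operation $\Sigma \mapsto \Torus^n \# \Sigma$ as a map $\Theta_n \to \manifoldset(\Torus^n)$; this is the map on structure sets induced by the degree-one collapse $c\colon \Torus^n \to \Sphere^n$ onto the top cell, so it is a genuine (pointed) map of structure sets. (ii) Produce a left inverse to $c_*$. The natural candidate is the "top-cell" map in the other direction: given a homotopy smoothing $f\colon N \to \Torus^n$, restrict attention to the preimage of a small disk and compare with the standard disk; concretely, one composes with the Novikov pinch or uses the splitting of the assembly map for $\Z^n$ (the Farrell–Hsiang / Shaneson splitting) to project $\manifoldset(\Torus^n)$ onto the $\Sphere^n$-summand. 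Because $\Torus^n$ is aspherical with polynomial-growth fundamental group, the relevant $L$-theory assembly map for $\Z^n$ is split injective (indeed an isomorphism rationally, and the surgery obstruction groups decompose as $L_*(\Z[\Z^n]) \cong \bigoplus_{k} \binom{n}{k} L_{*-k}(\Z)$), which is exactly what provides the retraction onto the top-cell contribution. (iii) Conclude: since the composite $\Theta_n \xrightarrow{c_*} \manifoldset(\Torus^n) \xrightarrow{\text{retract}} \Theta_n$ is the identity, if $\Sigma \neq \Sphere^n$ in $\Theta_n = \manifoldset(\Sphere^n)$ then $\Torus^n \# \Sigma \neq \Torus^n$ in $\manifoldset(\Torus^n)$.

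I expect the main obstacle to be step (ii): making the retraction onto the top-cell summand precise and verifying that under this retraction $\Torus^n \# \Sigma$ maps back to $[\Sigma]$, rather than to $[\Sigma]$ modified by lower-order terms. The subtlety is that the splitting of $\manifoldset(\Torus^n)$ is not canonical — it depends on choices of subtori and on the vanishing of certain Whitehead-torsion/UNil contributions — so one must check that the $\Sphere^n$-component is well-defined independently of these choices and that connected sum with $\Sigma$ has trivial component in every lower-dimensional summand (which is clear geometrically, since the connected sum is performed inside an embedded disk and is the identity elsewhere). An alternative, perhaps cleaner route that sidesteps the full structure-set machinery: use the fact that $\Torus^n \# \Sigma$ is smoothly the same as the manifold obtained by the clutching construction $(\Torus^n \setminus \Ball^n) \cup_{f} \Disk^n$ for $f$ representing $\Sigma \in \pi_0 \Diff(\Sphere^{n-1})$, and detect the exotic structure by pulling back along a finite cover: a finite cover of $\Torus^n \# \Sigma$ of degree $d$ is diffeomorphic to $\Torus^n \# (d \cdot \Sigma)$ (connected sum of $d$ copies), and if some such multiple is detected by a smooth invariant (e.g.\ the $\alpha$-invariant in the spin case, or more robustly the nontriviality in $\Theta_n$ which survives to the cover since $\Theta_n$ is finite), one gets the conclusion; but this only handles $\Sigma$ of infinite order or requires care with torsion, so I would ultimately fall back on the structure-set argument above as the clean general proof.
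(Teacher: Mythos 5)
Your overall strategy---pass to the structure set, identify connected sum with $\Sigma$ as the map induced by the degree-one collapse $c\colon \Torus^n \to \Sphere^n$ onto the top cell, and split off the top-cell summand---is the same as the paper's. Where you differ is in how the retraction in your step (ii) is produced. The paper avoids the $L$-theoretic splitting entirely: since $\structureset(\Sphere^n)\cong[\Sphere^n,\topo]$ and $\structureset(\Torus^n)\cong[\Torus^n,\topo]$ for $n\ge 5$ (smoothing theory, \cite[Sections~17.5 and 18.10]{Luck-Macko_Surgery-theory-Foundations}), and $\topo$ is an infinite loop space, one may pass to the reduced suspension, where $S\Torus^n$ splits as a wedge of spheres and $Sc$ becomes the inclusion of the top wedge summand; split injectivity of $c^\ast$ is then immediate, and no choices of subtori, UNil contributions, or assembly-map splittings enter. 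This disposes cleanly of exactly the point you flag as the ``main obstacle'' in step (ii); your route through the Shaneson/Hsiang--Shaneson splitting of $L_\ast(\Z[\Z^n])$ can be made to work, but it requires precisely the bookkeeping you worry about, and as written it is not yet a proof.

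There is also a genuine gap at the very end: you conclude a statement in $\manifoldset(\Torus^n)$ from a statement in $\structureset(\Torus^n)$, but these are different objects --- $\manifoldset(\Torus^n)=\structureset(\Torus^n)/\haut(\Torus^n)$, and $\haut(\Torus^n)\cong\GL_n(\Z)$ is an infinite group acting by post-composition. Knowing $[\Torus^n\#\Sigma\to\Torus^n]\neq[\id_{\Torus^n}]$ in the structure set does not by itself exclude that the two classes become identified after dividing out this action, i.e.\ that $\Torus^n\#\Sigma$ is nevertheless diffeomorphic to $\Torus^n$. The missing observation (the paper's \cref{lem:haut-action-on-base-point}) is that every self-homotopy-equivalence of $\Torus^n$ is homotopic to a diffeomorphism (it is determined up to homotopy by its action on $\pi_1\cong\Z^n$, which is realized by a linear diffeomorphism), so the basepoint $[\id_{\Torus^n}]$ is a fixed point of the $\haut(\Torus^n)$-action and its preimage in $\structureset(\Torus^n)$ is a single element. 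With that added, your argument closes. (Your alternative route via finite covers cannot be salvaged: $\Theta_n$ is a finite group, so every exotic sphere is torsion and the ``infinite order'' case you would rely on is empty.)
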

The proof follows easily via standard tools in surgery theory.
Start by recalling the relevant notation from the textbook of \textcite{Luck-Macko_Surgery-theory-Foundations}.
For a closed manifold $M$ let $\structureset(M)$ denote the (simple)\footnote{We will only apply this to the sphere and the torus, where every homotopy equivalence is simple.} structure set (\cite[Definition 11.2]{Luck-Macko_Surgery-theory-Foundations}), which consists of (simple) homotopy equivalences $f\colon N\to M$ modulo the following equivalence relation:
\[
    (f_0\colon N_0\to M)\sim(f_1\colon N_1\to M) \quad:\iff\quad \begin{matrix}
        \exists h\colon N_0\to N_1\text{ diffeomorphism such that}\\
        f_1\circ h \text{ is homotopic to } f_0.
    \end{matrix}
\]

By \cite[Sections 17.5 and 18.10]{Luck-Macko_Surgery-theory-Foundations}, we have $\structureset(M)\cong [M,\topo]$ for $n\ge5$ and $M=\Sphere^n$ or $M=\Torus^n$, where $\topo$ is a certain infinite loop space.

\begin{proposition}\label{prop:connected_sum_injective_on_structure_set}
    The map\footnote{The map is defined by choosing a representative \(f \colon \Sigma \to \Sphere^n\) equal to the identity in a disk where the connected sum is to be taken.} 
    \[ 
        \structureset(\Sphere^n) \to \structureset(\Torus^n), \quad [f \colon \Sigma \to \Sphere^n] \mapsto [\id_{\Torus^n} \# f \colon \Torus^n \# \Sigma \to \Torus^n],
    \]
    is injective.
\end{proposition}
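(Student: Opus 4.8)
The plan is to use naturality of the surgery exact sequence together with the identification $\structureset(\Sphere^n)\cong[\Sphere^n,\topo]$ and $\structureset(\Torus^n)\cong[\Torus^n,\topo]$ mentioned just above. Under these identifications, the connected-sum map $\structureset(\Sphere^n)\to\structureset(\Torus^n)$ should correspond to the map $[\Sphere^n,\topo]\to[\Torus^n,\topo]$ induced by a degree-one collapse map $c\colon\Torus^n\to\Sphere^n$ (pinching the complement of a small disk to a point). So the first step is to verify this compatibility: choosing the representative $f$ to be the identity on a disk, the normal invariant of $\id_{\Torus^n}\# f$ is the pullback along $c$ of the normal invariant of $f$, because the construction is supported in a disk and $c$ is a diffeomorphism there. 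Since $\Sphere^n$ is simply connected and $n\ge 5$, the structure set injects into the normal invariants (the surgery obstruction map $L_n(\Z)\to\structureset(\Sphere^n)$ has image detected appropriately — in fact for the sphere the Poincaré conjecture gives $\structureset(\Sphere^n)=\{*\}$ in the topological category, but we are in the smooth category where $\structureset(\Sphere^n)\cong\Theta_n$), so it suffices to show $c^*\colon[\Sphere^n,\topo]\to[\Torus^n,\topo]$ is injective.

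The second step is then to show that $c^*$ is injective, or more precisely that the composite $\structureset(\Sphere^n)\hookrightarrow[\Sphere^n,\topo]\xrightarrow{c^*}[\Torus^n,\topo]$ is injective on the relevant subgroup. Here I would use that $[\Sphere^n,\topo]=\pi_n(\topo)$ and that the collapse map $c\colon\Torus^n\to\Sphere^n$ admits a section up to homotopy in the appropriate sense — namely, the inclusion of the top cell $\Sphere^n\hookrightarrow\Torus^n$ (after collapsing the $(n-1)$-skeleton) composed with $c$ is the identity on $\Sphere^n$. Concretely, $\Torus^n$ has a CW structure with a single top cell, the collapse $q\colon\Torus^n\to\Torus^n/\Torus^n_{(n-1)}=\Sphere^n$ is split by the degree-one map $c$ in homology, and dually $c^*\colon H^n(\Sphere^n)\to H^n(\Torus^n)$ is an isomorphism. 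Since $\topo$ is an infinite loop space, $[-,\topo]$ is a generalized cohomology theory, and the splitting $\Torus^n\simeq \Sphere^n\vee(\text{lower cells})$ stably (the stable homotopy type of the torus splits as a wedge of spheres) gives that $c^*$ is a split injection on $[\Sphere^n,\topo]=\widetilde{[\Sphere^n,\topo]}$ — the top-dimensional summand maps isomorphically onto the corresponding summand of $[\Torus^n,\topo]$.

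The third step is to assemble these: given $[f\colon\Sigma\to\Sphere^n]$ with $\id_{\Torus^n}\# f\sim\id_{\Torus^n}$ in $\structureset(\Torus^n)$, its normal invariant in $[\Torus^n,\topo]$ is trivial; by Step 1 this normal invariant is $c^*$ of the normal invariant of $f$; by Step 2 the normal invariant of $f$ in $[\Sphere^n,\topo]$ is then trivial; and since $\structureset(\Sphere^n)\to[\Sphere^n,\topo]$ is injective (the surgery obstruction $L_n(\Z[\pi_1\Sphere^n])=L_n(\Z)$ acts trivially on the sphere's structure set because $\Sphere^n$ bounds a disk — this is the classical fact that smooth structures on $\Sphere^n$ are detected by their normal invariants), we conclude $[f]=[\id_{\Sphere^n}]$, i.e.\ $\Sigma=\Sphere^n$ in $\manifoldset(\Sphere^n)$, hence also $\Torus^n\#\Sigma=\Torus^n$ via \cref{thm:exotic-tori}'s mechanism.

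\emph{Main obstacle.} The delicate point is Step 1: making precise that the normal invariant of the connected sum $\id_{\Torus^n}\#f$ equals $c^*$ applied to the normal invariant of $f$. This requires care about basepoints and about the fact that normal invariants are really defined relative to a chosen disk, plus the interaction with the identification $\structureset\cong[-,\topo]$ which itself depends on choices (a reference structure). Getting the naturality square to commute on the nose — rather than up to some ambiguity — is where one must invoke the precise formulation of the surgery exact sequence as in \textcite{Luck-Macko_Surgery-theory-Foundations}, and verify that the pinch map $c$ is the correct map of spaces inducing the transformation. Once that compatibility is nailed down, Steps 2 and 3 are formal consequences of the stable splitting of the torus and the classical detection of exotic spheres by normal invariants.
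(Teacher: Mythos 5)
Your overall route is the same as the paper's: identify $\structureset(\Sphere^n)$ and $\structureset(\Torus^n)$ with $[\Sphere^n,\topo]$ and $[\Torus^n,\topo]$, check that the connected-sum map corresponds to precomposition with the degree-one collapse $c\colon\Torus^n\to\Sphere^n$, and deduce injectivity of $c^*$ from the stable splitting of $\Torus^n$ into a wedge of spheres using that $\topo$ is an infinite loop space. Your Steps~2 and the splitting argument match the paper, and the compatibility in Step~1 is asserted at essentially the same level of detail there.

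There is, however, a genuine error in how you justify the injectivity of $\structureset(\Sphere^n)\to[\Sphere^n,\topo]$. You phrase everything in terms of \emph{normal invariants} and the smooth surgery exact sequence, and assert that ``smooth structures on $\Sphere^n$ are detected by their normal invariants.'' If $\topo$ were $\mathrm{G}/\mathrm{O}$ and the map were the normal invariant, this is false: the kernel of $\Theta_n\to\pi_n(\mathrm{G}/\mathrm{O})$ is $bP_{n+1}$, the subgroup of homotopy spheres bounding parallelizable manifolds, which is the orbit of the base point under the action of $L_{n+1}(\Z)$ (not $L_n(\Z)$, as you write) and is typically nonzero --- for instance $bP_{8}=\Theta_7=\Z/28$, so \emph{every} exotic $7$-sphere has trivial normal invariant and your Step~3 would fail to detect any of them. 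The identification the paper actually uses is different in kind: $\topo=\mathrm{Top}/\mathrm{O}$, and $\structureset(\Sphere^n)\cong\Theta_n\cong\pi_n(\mathrm{Top}/\mathrm{O})$ for $n\ge 5$ (similarly for the torus) is a \emph{bijection} coming from smoothing theory combined with the triviality of the topological structure sets of $\Sphere^n$ and $\Torus^n$, as in \cite[Sections~17.5 and 18.10]{Luck-Macko_Surgery-theory-Foundations}. With that reading no injectivity of the structure set into anything needs to be argued, and the remainder of your plan (the compatibility of the connected sum with $c^*$ and the wedge decomposition of the suspended torus) goes through exactly as in the paper; but as literally written, with normal invariants in $\mathrm{G}/\mathrm{O}$, the proof breaks.
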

\begin{proof}
    Consider the standard cell-decomposition of $\Torus^n$ with $\binom{n}{k}$ many $k$-cells and let $c\colon \Torus^n\to\Sphere^n$ be the degree 1 map collapsing a neighborhood of the $(n-1)$-skeleton to a point.
    Precomposition yields a map
    \[
        [\Sphere^n,\topo]\longrightarrow[\Torus^n,\topo].
    \]
    Note that $c$ maps a disk inside the $n$-cell of $\Torus^n$ to the complement of the base point in $\Sphere^n$.
    Therefore, given an element $[\Sigma \to \Sphere^n] \in \structureset(\Sphere^n)\cong [\Sphere^n,\topo]$, its image under the composition with the above map and $[\Torus^n, \topo] \cong \structureset(\Torus^n)$ is given by $[\Torus^n\#\Sigma \to \Torus^n]$.
    Now, $\topo$ is an infinite loop space, so there exists a space $Y$ such that $\Omega Y\simeq \topo$ and we deduce $[X,\topo] = [X,\Omega Y]=[S X,Y]$ for any space $X$ where $S$ denotes the reduced suspension.\footnote{We deviate from the standard notation $\Sigma$ for the reduced suspension to not confuse it with the exotic sphere $\Sigma$.}
    Since the suspension of the torus decomposes into a wedge-product of spheres $S\Torus^n \cong \Sphere^{n+1}\vee\bigvee_{i}\Sphere^{n_i}$.
    The induced map $Sc\colon S\Torus^n\to S\Sphere^n$ translates to being the identity on $\Sphere^{n+1}$ and collapsing every other factor $\Sphere^{n_i}$ in the above wedge-decomposition to the base point.
    Therefore, the middle map of
    \begin{align*}
        [S\Sphere^n,Y]\cong\left[\Sphere^{n+1},Y\right]\cong\pi_{n+1}(Y)\to
            \pi_{n+1}&(Y)\oplus\bigoplus_{i}\pi_{n_i}(Y)\\
                & \cong  \left[\Sphere^{n+1}\vee\bigvee_{i}\Sphere^{n_i},Y\right]\cong [S\Torus^n, Y]
    \end{align*}
    is given by the inclusion into the first summand.
    Hence, this composition is injective and so is the map $\structureset(\Sphere^n)\to\structureset(\Torus^n$).
\end{proof}

In order to obtain $\manifoldset(M)$ from $\structureset(M)$, we need to divide out the group $\haut(M)$ of homotopy classes of (simple) self-homotopy equivalences of \(M\) (see the discussion around \cite[Definition 11.3]{Luck-Macko_Surgery-theory-Foundations}).
While this is a difficult problem to study in general, for our purposes the observation recorded in the following lemma is enough.

\begin{lemma}\label{lem:haut-action-on-base-point}
    Let \(M\) be a smooth manifold such that every simple self-homotopy equivalence \(M \to M\) is homotopic to a diffeomorphism.
    If \([f \colon N \to M] \neq [\id_M \colon M \to M] \in \structureset(M)\), then \(N\) is not diffeomorphic to \(M\).
\end{lemma}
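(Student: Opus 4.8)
The plan is to prove the contrapositive directly from the definitions. Suppose $[f \colon N \to M] \neq [\id_M] \in \structureset(M)$ but, for contradiction, that there exists a diffeomorphism $h \colon N \to M$. I want to exhibit an equivalence in the structure set between $f$ and $\id_M$, contradicting the hypothesis. The natural candidate is to use $h$ itself as the diffeomorphism witnessing the relation, but the subtlety is that the relation $(f \colon N \to M) \sim (\id_M \colon M \to M)$ requires a diffeomorphism $k \colon N \to M$ such that $\id_M \circ k = k$ is homotopic to $f$. We are handed $h$, but there is no reason for $h$ to be homotopic to $f$; $h$ could be a ``random'' diffeomorphism.

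The fix is to correct $h$ by a self-homotopy equivalence of $M$. Consider $\varphi \coloneqq f \circ h^{-1} \colon M \to M$. Since $f$ is a simple homotopy equivalence and $h^{-1}$ is a diffeomorphism (hence a simple homotopy equivalence), $\varphi$ is a simple self-homotopy equivalence of $M$. By the hypothesis on $M$ in the lemma, $\varphi$ is homotopic to a diffeomorphism $\psi \colon M \to M$. Now set $k \coloneqq \psi^{-1} \circ \varphi \circ h \colon N \to M$; wait—more cleanly, consider the composite $k \coloneqq \psi \circ h \colon N \to M$... let me reorganize: since $\varphi \simeq \psi$, we have $f = \varphi \circ h \simeq \psi \circ h$, and $\psi \circ h \colon N \to M$ is a diffeomorphism (composition of diffeomorphisms). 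Thus $k \coloneqq \psi \circ h$ is a diffeomorphism with $\id_M \circ k = k \simeq f$, which is exactly the condition for $(f \colon N \to M) \sim (\id_M \colon M \to M)$ in $\structureset(M)$. This contradicts $[f] \neq [\id_M]$.

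Therefore the key steps, in order, are: (1) pass to the contrapositive and assume a diffeomorphism $h \colon N \to M$ exists; (2) form the self-homotopy equivalence $\varphi = f \circ h^{-1}$ and note it is simple (using that $f$ is a simple homotopy equivalence and diffeomorphisms are simple); (3) invoke the standing assumption on $M$ to homotope $\varphi$ to a diffeomorphism $\psi$; (4) observe $\psi \circ h$ is a diffeomorphism homotopic to $f$, hence $[f] = [\id_M]$ in $\structureset(M)$, the desired contradiction.

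The only point requiring any care is step (2): one must know that a homotopy equivalence which is ``simple'' composed with a diffeomorphism is again simple, i.e.\ that the Whitehead torsion behaves additively and that diffeomorphisms have vanishing torsion. This is standard (see~\cite[Chapter~11]{Luck-Macko_Surgery-theory-Foundations}), and in the cases $M = \Sphere^n$ or $M = \Torus^n$ that we ultimately apply this to, the simplicity issue is vacuous since $\Wh(\pi_1 M)$ vanishes for these groups, so every homotopy equivalence is automatically simple. Hence there is no real obstacle; the lemma is essentially an unwinding of the definition of $\structureset$ together with the hypothesis that $\haut(M)$ is realized by diffeomorphisms.
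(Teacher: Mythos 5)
Your proof is correct and is essentially the same argument as the paper's: the paper phrases it by saying that $[\id_M]$ is a fixed point of the $\haut(M)$-action on $\structureset(M)$, which, once unwound, is exactly your step of replacing the given diffeomorphism $h$ by $\psi\circ h$ where $\psi$ is a diffeomorphism homotopic to $f\circ h^{-1}$. The remark about simplicity being preserved under composition with diffeomorphisms is the right thing to note and is vacuous in the intended applications, as you say.
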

\begin{proof}
    Since every element of $\haut(M)$ can be represented by a diffeomorphism, the trivial element \([\id_M] \in \structureset(M)\) is a fixed point of the $\haut(M)$-action on \(\structureset(M)\).
    Thus the preimage of \([M] \in \manifoldset(M)\) under the quotient map \(\structureset(M) \to \structureset(M) / \haut(M) = \manifoldset(M)\) consists of the single element \([\id_M]\).
\end{proof}

This is useful, for instance because we have $\haut(\Sphere^n)=\Z/2$.
More precisely, every homotopy self-equivalence of the sphere is homotopic to the identity or the orientation reversing diffeomorphism \((x_1, x_2, \dotsc, x_{n+1}) \mapsto (-x_1, x_2, \dotsc, x_{n+1})\).
For the torus, the situation is slightly more complicated, as we have $\haut(\Torus^n)\cong\GL_n(\Z)$.
This follows from the fact the torus is a $\mathrm{K}(\Z^n,1)$ and homotopy classes of maps $[\Torus^n,\Torus^n]$ are hence determined by their induced maps on fundamental groups.
Still, every homotopy equivalence is homotopic to a diffeomorphism.
In sum, \cref{lem:haut-action-on-base-point} is applicable to both \(M = \Sphere^n\) and \(M = \Torus^n\).

    \begin{proof}[Proof of \cref{thm:exotic-tori}]
    Given an exotic sphere $[\Sigma] \neq [\Sphere^n] \in\manifoldset(\Sphere^n)$, let \(\Sigma \to \Sphere^n\) be any homotopy equivalence (say, equal to the identity on some disk).
    By \cref{prop:connected_sum_injective_on_structure_set}, we obtain an element \([\Torus^n \# \Sigma \to \Torus^n] \neq [\id_{\Torus^n}] \in \structureset(\Torus^n)\).
    Thus \(\Torus^n \# \Sigma\) is not diffeomorphic to \(\Torus^n\) by \cref{lem:haut-action-on-base-point}.
\end{proof}

\printbibliography

\end{document}